\newtheorem{theorem}{Theorem}[section]
\newtheorem{example}[theorem]{Example}%
\newtheorem{cor}[theorem]{Corollary}
\theoremstyle{definition}
\newtheorem{algorithm}[theorem]{Algorithm}
\numberwithin{equation}{section}
\newcommand{\re}{\mathbb{R}}
\newcommand{\st}{\mathit{s.t.}}
\newcommand{\reff}[1]{(\ref{#1})}
\newcommand{\mc}[1]{\mathcal{#1}}
\newcommand{\qmod}[1]{\mathit{QM}[#1]}
\newcommand{\bdes}{\begin{description}}
	\newcommand{\edes}{\end{description}}
\newcommand{\bal}{\begin{align}}
	\newcommand{\eal}{\end{align}}
\newcommand{\bnum}{\begin{enumerate}}
	\newcommand{\enum}{\end{enumerate}}
\newcommand{\bit}{\begin{itemize}}
	\newcommand{\eit}{\end{itemize}}
\newcommand{\bea}{\begin{eqnarray}}
	\newcommand{\eea}{\end{eqnarray}}
\newcommand{\be}{\begin{equation}}
	\newcommand{\ee}{\end{equation}}
\newcommand{\baray}{\begin{array}}
	\newcommand{\earay}{\end{array}}
\newcommand{\bsry}{\begin{subarray}}
	\newcommand{\esry}{\end{subarray}}
\newcommand{\bca}{\begin{cases}}
	\newcommand{\eca}{\end{cases}}
\newcommand{\bcen}{\begin{center}}
	\newcommand{\ecen}{\end{center}}
\newcommand{\bbm}{\begin{bmatrix}}
	\newcommand{\ebm}{\end{bmatrix}}
\newcommand{\bmx}{\begin{matrix}}
	\newcommand{\emx}{\end{matrix}}
\newcommand{\bpm}{\begin{pmatrix}}
	\newcommand{\epm}{\end{pmatrix}}
\newcommand{\btab}{\begin{tabular}}
	\newcommand{\etab}{\end{tabular}}
\begin{document}
	
	\title[Robust approximation CCO]{Robust approximation of chance constrained optimization 
			with polynomial perturbation}
	
	\author[Bo Rao]{Bo Rao}
	\address{Bo Rao, School of Mathematics and Computational Sciences,
		Xiangtan University, Hunan, China}
	\email{raob@smail.xtu.edu.cn}
	
	\author[Liu Yang]{Liu Yang}
	\address{Liu Yang, Guangming Zhou, School of Mathematics and Computational Sciences, Xiangtan University, Hunan, China}
	\address{Hunan Key Laboratory for Computation and Simulation in Science and Engineering, Xiangtan University, Hunan, China}
	\email{yangl410@xtu.edu.cn, zhougm@xtu.edu.cn}
	
	\author[Guangming Zhou]{Guangming Zhou}
	
	\author[Suhan Zhong]{Suhan Zhong}
	\address{Suhan Zhong, Department of Mathematics, Texas A\&M University,
		College Station, Texas, USA, 77843}
	\email{suzhong@tamu.edu}
	
	\subjclass[2020]{90C15, 90C17, 90C22, 90C59}
	
	\keywords{Chance constrained optimization, Robust optimization, 
		Nonnegative polynomial, Semidefinite relaxation}
	
	\begin{abstract}
		This paper proposes a robust approximation method for solving chance constrained optimization (CCO) of polynomials. 
		Assume the CCO is defined with an individual chance constraint that is affine in the decision variables. 
		We construct a robust approximation by replacing the chance constraint with a robust constraint over an uncertainty set.
		When the objective function is linear or SOS-convex, 
		the robust approximation can be equivalently transformed into linear conic optimization. 
		Semidefinite  relaxation algorithms are proposed to solve these linear conic transformations globally
		and their convergent properties are studied.
		We also introduce a heuristic method to find efficient uncertainty sets 
		such that optimizers of the robust approximation are feasible to the original problem. 
		Numerical experiments are given to show the efficiency of our method.
	\end{abstract}
	
	\maketitle
	\section{Introduction}
Many real-world decision problems can be conveniently described by optimization with uncertainties. 
The chance constrained optimization (CCO) is a popular framework in stochastic programming.
It aims at finding the best decisions such that random constraints are satisfied with a probability 
greater than or equal to a specified threshold. A CCO problem is
\begin{equation}\label{eq:cco}
	\left\{\begin{array}{cl}
		\min\limits_{x \in X}  & f(x) \\
		\st &  \mathbb{P}\left\{\xi : h(x,\xi) \geq 0 \right\} \geq 1 - \epsilon,
	\end{array}
	\right.
\end{equation}
where $x\in \mathbb{R}^n$ is the decision variable constrained in a set $X$,
$\xi\in\mathbb{R}^{r}$ is the random vector with the probability distribution $\mathbb{P}$ supported in 
a given set $S\subseteq \re^r$. 
The $f:\mathbb{R}^n\to \mathbb{R}$ is a function in $x$, 
$h:\, \mathbb{R}^n\times \mathbb{R}^{r}\to\mathbb{R}^{m_0}$ is a scalar or vector-valued function, 
and $\epsilon\in(0,1)$ is a given risk level. 
In \reff{eq:cco}, we use $\mathbb{P}\left\{\xi: h(x,\xi) \geq 0 \right\}$ to denote the probability of 
$h(x,\xi)\ge 0$ with respect to the distribution $\mathbb{P}$.
The corresponding random constraint is called the {\it chance constraint}.
When $h$ is a scalar function, \reff{eq:cco} is called an \textit{individual} chance constrained problem.
Otherwise, it is called a \textit{joint} chance constrained problem if $m_0>1$. 
The (\ref{eq:cco}) is said to have a {\it polynomial perturbation} if $h(x,\xi)$ is
a polynomial scalar or vectorin $\xi$.
This class of CCO problems has been studied in \cite{FEN11,LAG05,LJB17,JAS15}.
CCO is very difficult to solve due to the complicated structure of its feasible set. 
In this paper, we propose an efficient robust approximation approach to handle individual CCO problems 
with polynomial perturbation, under some convex assumptions in the decision variables.

Chance constrained optimization has broad applications in finance \cite{BKP09}, emergency management \cite{CDE12}, 
and energy management \cite{CHA65, DBI14}. 
Individual CCO problems are studied in \cite{AHM14, DBE21, ANE07, YCA20}. 
For a comprehensive introduction to the topic, we refer to monographs \cite{PRE03, DEN21} and references therein. 
It is very challenging to solve a general CCO problem. 
A major reason is that the feasible set usually does not have a convenient characterization in computations.
Approximation approaches are often used to solve CCO problems.
Scenario approximation is given in \cite{GCC06, ANE06}. 
Sample average approximation (SAA) is given in \cite{CHE19, JLU08, BKP09}. 
Condition value at risk (CVaR) approximation is given in \cite{ANE07, ROC00}. 
Bernstein approximation is given in \cite{ANE07}. 
DC approximation is given in \cite{SFE14, LJH11}.
Smooth and nonsmooth approximations are given in \cite{RKA21, YCA20}.

In this paper, we propose a robust approximation approach for individual CCO problems with polynomial perturbation. 
Assume $f,h$ in \reff{eq:cco} are scalar polynomials and that $h$ is affine in $x$. 
Let $d$ denote the highest degree of $h(x,\xi)$ in $\xi$.
One can use a scalar matrix $A$ and vector $b$ to represent
\begin{equation}\label{eq:poly_pert} 
	h(x,\xi) \,=\, (Ax+b)^T[\xi]_d
\end{equation}
as a vector inner product, where $[\xi]_d :=(1,\xi_1,\ldots,\xi_r,\xi_1^2,\ldots,\xi_r^d)^T$
is the monomial vector in $\xi$ of the highest degree $d$ ordered alphabetically.
Since the chance constraint is nonlinear in $\xi$, the feasible set of \reff{eq:cco} is hard to characterize computationally. 
Instead, it is much simpler to find an uncertainty set $U\subseteq \re^r$ such that for every $x\in X$,
\[
h(x,\xi)\geq 0,\, \forall \xi\in U\quad \Rightarrow
\quad\mathbb{P}\left\{\xi: h(x,\xi) \geq 0 \right\} \geq 1 - \epsilon.
\]
This implies a robust approximation of (\ref{eq:cco}) as
\begin{equation}\label{eq:robust_approximation}
	\left\{\begin{array}{cl}
		\min\limits_{x \in X} & f(x) \\ 
		\st & h(x,\xi) \geq 0,\quad \forall\xi \in U.
	\end{array}\right.
\end{equation}

We refer to \cite{ABN00, DBE21, DBE04, HON21, ZLI14, YYU17} for relevant work on the robust approximation approach.
Let $\mathscr{P}(U)$ denote the set of polynomials in $\xi$ that are nonnegative on $U$.
Then we can equivalently reformulate \reff{eq:robust_approximation} into
\begin{equation}\label{eq:lc_app}
	\left\{\begin{array}{cl}
		\min\limits_{x\in X} & f(x)\\
		\st &  h(x,\xi) = (Ax+b)^T[\xi]_d \in \mathscr{P}(U).\\
	\end{array}\right.
\end{equation}
In the above, the membership constraint means $h(x,\xi)$, as a polynomial in $\xi$, belongs to $\mathscr{P}(U)$.

When $f$ is affine in $x$ and $X$ has a semidefinite representation,
the \reff{eq:lc_app} is a linear conic optimization problem involving a Cartesian product 
of semidefinite and nonnegative polynomial cones.
When $f$ is a general polynomial, \reff{eq:lc_app} can be further relaxed into linear conic optimization
by introducing new variables to represent nonlinear monomials. 
Such a relaxation is tight if $f(x)$ and $X$ are given by SOS-convex polynomials
\footnote[1]{A polynomial $f\in \mathbb{R}[x]$ is said to be SOS-convex if there exists a 
	matrix-polynomial $F(x)$ such that its Hessian matrix $\nabla^2 f=F(x)^TF(x)$.}.
Under the archimedeaness and some general assumptions, the transformed linear conic 
optimization can be solved globally by Moment-Sum-of-Squares (SOS) relaxations \cite{NIE14,NIE15}. 

For convenience, we assume the uncertainty set in \reff{eq:robust_approximation} has the form
\[
U \,=\, \{\xi\in\re^r: \Gamma -(\xi-\mu)^T\Lambda^{-1}(\xi-\mu) \,\geq\, 0 \},
\]
where $\Gamma>0$ is a scalar, $\mu\in\re^r$ is a real vector and $\Lambda\in\re^{r\times r}$ is a positive definite matrix. 
In this paper, $\mu$ and $\Lambda$ are simply chosen
as the mean value vector and covariance matrix of the random variable. 
In numerical experiments, they are either given directly or computed from samples.
The scalar $\Gamma$ is used to describe the size of $U$. The selection of $\Gamma$ can be tricky.
It should be big enough such that the optimal solution of \reff{eq:robust_approximation} 
is feasible to the original problem \reff{eq:cco}. 
Meanwhile, a relatively smaller $\Gamma$ is usually preferred for an efficient approximation.
In computations, we use a heuristic iterative method to find good $\Gamma$. 
We first determine an initial value by the quantile estimation approach,
and then update based on the probability of constraint violation of computed solutions. 
The quality of a robust approximation is usually dependent on the selection of uncertainty sets, 
see related works in \cite{GUZ16, GUZ17, GUZ17a, ZLI12, DBE21}. 
Simple sets like polyhedrons and ellipsoids are often selected as uncertainty sets. 
The size of an uncertainty set can be determined by prior probability bounds \cite{DBE04, DBE18, DBE21, HON21, ZLI14}, 
by posterior probability bounds \cite{DBE21,ZLI14}, or by computed solutions \cite{ZLI12}.

\subsection*{Contribution}
This paper focuses on the individual chance constrained optimization problem with polynomial perturbation. 
We find an efficient robust approximation of CCO by replacing the chance constraint with a robust constraint on an ellipsoidal 
uncertainty set.
We propose semidefinite relaxation algorithms to solve the robust approximation globally. 
Analysis of convergent properties for these algorithms is carried out. 
In addition, a heuristic method is presented to find good uncertainty sets. 
Numerical experiments are given to show the efficiency of our approach.
The main contributions of this paper are summarized as follows.
\begin{itemize}
	\item
	We propose a robust approximation approach for individual chance constrained optimization with polynomial perturbation. 
	The approximation is constructed by replacing the ``hard'' chance constraint with a ``tractable'' robust constraint 
	over a convenient uncertainty set.
	Assume the chance constraint is linear in the decision variable.
	We show the approximation can be transformed into linear conic optimization with nonnegative polynomial cones. 
	The transformation is equivalent under some convex assumptions.
	
	\item
	We propose efficient semidefinite relaxation algorithms to solve these linear conic transformations.
	Under some general assumptions, these algorithms can converge to the global optimizers of the robust approximations.
	
	\item
	In addition, we introduce a heuristic method to find good uncertainty sets in robust approximations.
	We determine a preferred size of uncertainty set based on sampling information and 
	corresponding computed candidate solutions.
	Numerical experiments are presented to show the efficiency of our method.
\end{itemize}

The rest of this paper is organized as follows. 
Section \ref{sc:pre} reviews some basics for polynomial and moment optimization. 
Section \ref{sc:relaxalg} presents an algorithm for solving the robust approximation problem with a linear objective.  
Section \ref{sc:sosconvexpolynomial} studies the case where the objective function is an SOS-convex polynomial. 
Section \ref{sc:opti} discusses the construction of uncertainty sets. 
Section \ref{sc:numex} performs some numerical experiments and an application. Conclusions are summarized in Section \ref{sc:conclusion}.

\section{Preliminaries} \label{sc:pre}
\subsection*{Notation}
The symbol $\mathbb{R}$ (resp., $\mathbb{R}_{+}$, $\mathbb{N}$) denotes the set of 
real numbers (resp., nonnegative real numbers, nonnegative integers).  
The superscript $^T$ denotes the transpose of a vector or matrix.
The $e_{i} = (0,\ldots, 0,1,0,\ldots,0)^T$ denotes the $i$th unit vector in $\mathbb{R}^{n}$. 
For a vector $v\in\re^n$ and a scalar $R>0$, $B(v, R)$ denotes the closed ball centered at $v$ 
with radius $R$.
A symmetric matrix $W\in\re^{n\times n}$ is said to be positive semidefinite if 
$v^TWv\ge 0$ for all $v\in \re^n$, denoted as $W\succeq 0$. If $v^TWv>0$
for all nonzero vectors $v$, then $W$ is said to be positive definite, denoted as $W\succ 0$.  
Let $\xi := \left(\xi_1, \cdots, \xi_r\right)$. The $\mathbb{R}[\xi]$ stands for the ring of 
polynomials in $\xi$ with real coefficients and $\re[\xi]_d\subseteq \re[\xi]$ contains all 
polynomials of degrees at most $d$. For a polynomial $p$, $\operatorname{deg}(p)$ 
denotes its degree. For a tuple $p := \left(p_1, \cdots, p_m\right)$ of polynomials, 
$\operatorname{deg}(p)$ denotes the highest degree of all $p_i$, i.e., 
$\operatorname{deg}(p)=\max\{\operatorname{deg}(p_1),\cdots,\operatorname{deg}(p_m)\}$.
For a degree $d$, $[\xi]_d$ denotes the monomial vector with degrees up to $d$ and ordered
alphabetically, i.e.,
\begin{equation}\label{vectorofmonomials}
	[\xi]_d \,:=\, (1\quad\xi_1\quad\cdots\quad\xi_r\quad\xi_1^2\quad \xi_1\xi_2\quad \cdots\quad\xi_r^d)^T.
\end{equation}
Denote the power set 
$$\mathbb{N}_d^r :=\left\{\alpha:=(\alpha_1,\cdots,\alpha_r) \in \mathbb{N}^r:
\alpha_1+\cdots+\alpha_r\leq d\right\}.$$ 
For $t \in \mathbb{R}$, $\lceil t \rceil$ denotes the smallest integer that is no less than $t$. 
For a set $X\subseteq \re^n$, $\textbf{1}_{X}(\cdot)$ denotes the characteristic
function of $X$, i.e., 
$$\textbf{1}_{X}(x) \,:=\, \left\{\begin{array}{ll}
	1,& \mbox{if } x \in X, \\ 
	0,& \mbox{if } x \notin X.
\end{array}\right.$$

\subsection{SOS and nonnegative polynomials.}
A polynomial $\sigma \in \mathbb{R}[\xi]$ is said to be  
\textit{sum-of-squares}(SOS) if there are  some polynomials $s_1, \cdots,s_k \in \mathbb{R}[\xi]$ such that $\sigma := s_1^2 + \cdots + s_k^2$. We use $\Sigma[\xi]$ to denote the set of all SOS polynomials in $\mathbb{R}[\xi]$ and denote $\Sigma[\xi]_d:=\Sigma[\xi]\cap \re[\xi]_d$ for each degree $d$. 
For a polynomial $p \in \mathbb{R}[\xi]$, it is said to be {\it SOS-convex} if its Hessian matrix $\nabla^2 p$ is SOS, i.e., 
$\nabla^2 p=V(\xi)^TV(\xi)$ for a matrix-polynomial $V(\xi)$, 
and it is said to be {\it SOS-concave} if 
$-p$ is SOS-convex.

Let $g = (g_1,\ldots, g_m)$ be a tuple of polynomials. The quadratic module of $g$ is a polynomial cone defined as
\[
\qmod{g} \,\coloneqq\, \Sigma[\xi] + g_1 \cdot \Sigma[\xi]+ \cdots + g_m \cdot \Sigma[\xi].
\]
For an order $k$ such that $2k\ge \deg(g)$, the $k$th order truncated quadratic module of $g$ is given by
\[
\qmod{g}_{2k}\, \coloneqq\,  \Sigma[\xi]_{2k} + g_1\cdot\Sigma[\xi]_{2k-\deg(g_1)} + \cdots + 
g_m \cdot \Sigma[\xi]_{2k-\deg(g_m)}.
\]
For every $k$, we have the containment relation
\[ \qmod{g}_{2k}\subseteq\qmod{g}_{2k+2}\subseteq \qmod{g}.\]

Let $U\coloneqq\{\xi\in \mathbb{R}^{r}~\vert~ g(\xi)\geq 0\}$. Denote the nonnegative polynomial cone on $U$
\begin{equation}\label{eq:P(U)}
	\mathscr{P}(U) \,:=\, \{p\in\re[\xi] : p(\xi)\ge 0,\,\forall \xi\in U\}, 
\end{equation}
and its $d$th degree truncation $\mathscr{P}_d(U) = \mathscr{P}(U)\cap \re[\xi]_d$.
Clearly, $\qmod{g}$ is a subset of $\mathscr{P}(U)$. If there exists $R > 0$ such that 
$R^2 - \Vert \xi \Vert_2^2 \in \qmod{g}$, then $U$ must be compact and $\qmod{g}$ is said to be {\it archimedean}.
Under the archimedean assumption on $\qmod{g}$, if a polynomial $p\in\re[\xi]$ is positive on $U$, then
we have $p\in\qmod{g}$. The conclusion is called \textit{Putinar's Positivstellensatz} \cite{PUM93}. 

\subsection{Moment and localizing matrices}\label{subsc:mlm} 
Let $\mathbb{R}^{\mathbb{N}_{2k}^r}$ be the space of real vectors that are indexed by $\alpha \in \mathbb{N}_{2k}^r $. 
A real vector $y:=\left(y_{\alpha}\right)_{\alpha \in \mathbb{N}_{2k}^r }$ is called  \textit{truncated multi-sequence}(\textit{tms}) of degree $2k$.
It gives a  {\it Riesz functional} $\mathscr{L}_{y}$ acting on $\mathbb{R}[\xi]_{2k}$ as 
\[
\mathscr{L}_{y}\left(\sum\limits_{\alpha\in\mathbb{N}^{m}_{2k}}p_{\alpha}\xi^{\alpha}\right):=\, \sum\limits_{\alpha\in\mathbb{N}^{m}_{2k}}p_{\alpha}y_{\alpha}.
\]
For $p \in \mathbb{R}[\xi]_{2k}$ and $y \in \mathbb{R}^{\mathbb{N}_{2k}^r}$, we write that
\[
\left \langle p, y \right \rangle \,:= \, \mathscr{L}_{y}(p). 
\]
For a degree $d$, a tms $y\in\mathbb{R}^{\mathbb{N}_{d}^r}$ is said to admit a measure $\mu$ supported in a set $U$ if $y_{\alpha} = \int_U \xi^{\alpha} d\mu$ for every $\alpha\in\mathbb{N}_{d}^r$.
Denote 
\[
\mathscr{R}_d(U) \,:=\, \{y\in\re^{\mathbb{N}_d^r}: \mbox{$y$ admits a measure supported in $U$}\}.
\]
When $U$ is compact, $\mathscr{R}_d(U)$ is a closed convex cone that is dual to $\mathscr{P}_d(U)$, written as $\mathscr{R}_d(U) = \mathscr{P}_d(U)^*$.
Let $g \in \mathbb{R}[\xi]$ with $\deg(g)\le 2k$.
The $k$th order \textit{localizing matrix} of $g$ and $y$ is \
the symmetric matrix $L_{g}^{(k)}(y)$ such that
\[
\hbox{vec}(a_1)^T \left(L_{g}^{(k)}[y]\right)\hbox{vec}(a_2) \,=\, \mathscr{L}_{y}(ga_1a_2)
\]
for all $a_1, a_2\in \mathbb{R}[\xi]$ with degrees at most $k - \lceil \mbox{deg}(g)/2 \rceil$, where 
$\hbox{vec}(a_i)$ denotes the vector of  coefficients of $a_i$. When $g=1$ is the constant polynomial, 
$L_{1}^{(k)}[y]$ becomes the $k$th order \textit{moment matrix}
\[ M_{k}[y] \,:=\, L_{1}^{(k)}[y]. \]
For instance, when $r = 2, k=2$ and $g = 2-\xi_1^2-\xi_2^2$,
\[
L_g^{(2)}[y] = \begin{bmatrix}
	2y_{00}-y_{20}-y_{02} & 2y_{10}-y_{30}-y_{12} & 2y_{01}-y_{21}-y_{03}\\
	2y_{10}-y_{30}-y_{12} & 2y_{20}-y_{40}-y_{22} & 2y_{11} -y_{31}-y_{13}\\
	2y_{01}-y_{21}-y_{03} & 2y_{11}-y_{31}-y_{13} & 2y_{02}-y_{22}-y_{04}
\end{bmatrix}.
\]
Moment and localizing matrices can be used to represent dual cones of truncated
quadratic modules. 
For an order $k$, define the tms cone
\begin{equation}\label{eq:S(g)_2k}
	\mathscr{S}[g]_{2k} \,\coloneqq\, 
	\{ y\in\re^{\mathbb{N}_{2k}^r}: M_k[y]\succeq 0, L_g^{(k)}[y]\succeq 0\}.
\end{equation}
The $\mathscr{S}[g]_{2k}$ is closed convex for each $k$.
In particular, it is the dual cone of $\qmod{g}_{2k}$ \cite[Theorem~2.5.2]{Niebook}. That is,
\[
\mathscr{S}[g]_{2k} \,=\, (\qmod{g}_{2k})^* 
\,\coloneqq\, \left\{ y\in\re^{\mathbb{N}_{2k}^r}: \langle p, y\rangle \ge 0\,\forall p\in \qmod{g}_{2k} 
\right\}.
\]
In the above, the superscript $^*$ is the notation for dual cone.

\section{CCO with a linear objective } \label{sc:relaxalg}
Assume \reff{eq:cco} is an individual CCO problem with polynomial perturbation as in \reff{eq:poly_pert}.
Consider the relatively simple case that $f$ is a linear function, i.e., $f(x) = c^Tx$. The \reff{eq:cco} becomes 
\begin{equation}\label{eq:cco_linobj}
	\left\{\begin{array}{cl}
		\min\limits_{x\in X} &\,\, c^{T}x\\
		\st &\,\,  \mathbb{P}\{\xi: (Ax+b)^T[\xi]_d \ge 0\}\ge 1-\epsilon,
	\end{array}\right.
\end{equation}
where $A$ is a scalar matrix, $b,c$ are scalar vectors and $[\xi]_d$ is the monomial vector as 
in \reff{vectorofmonomials}. Let $\mu$ denote the mean value and $\Lambda$ denote the covariance matrix
of $\xi$.
For a proper size $\Gamma>0$, define the uncertainty set
\begin{equation}\label{eq:setU2}
	U \,=\, \{ \xi\in\re^r: \Gamma - (\xi-\mu)^T\Lambda^{-1}(\xi-\mu) \ge 0\}.
\end{equation}
We transform the chance constrain in \reff{eq:cco} into the robust constraint
\[
(Ax+b)^T[\xi]_d  \geq 0,~~\forall\xi\in U.
\]
It can also be interpreted as $(Ax+b)^{T}[\xi]_{d}$, as a polynomial in $\xi$, is nonnegative on $U$.
Recall that nonnegative polynomial cone $\mathscr{P}(U)$ as in \reff{eq:P(U)}. 
The robust approximation 
\reff{eq:robust_approximation} can be written as 
\begin{eqnarray}\label{primalproblem}
	\left\{ \begin{array}{rl}
		f^{\min}\,:=\, \min\limits_{x\in X} & c^{T}x  \\
		\st \, &  (Ax+b)^T[\xi]_d\in\mathscr{P}_d(U),
	\end{array}\right.
\end{eqnarray}
where $\mathscr{P}_d(U)$ is the $d$th degree truncation of $\mathscr{P}(U)$.
The \reff{primalproblem} is a linear conic optimization problem with a 
nonnegative polynomial cone. 
Since $\mathscr{P}_d(U)$ is hard to characterize in computations, 
we consider solving \reff{primalproblem} by semidefinite relaxations with quadratic modules.

\subsection{A semidefinite relaxation algorithm}
\label{A_Semidefinite_Relaxation_Algorithm}
Let $k_0:=\max\left\{\lceil d/2 \rceil,1\right\}$ and denote 
\[
g(\xi) \,:=\, \Gamma-(\xi-\mu)^T\Lambda^{-1}(\xi-\mu).
\]
Clearly, $\qmod{g}$ is archimedean since $U$ is always compact for given $\Gamma$. 
We can use truncated quadratic modules $\qmod{g}_{2k}$ to give good approximations of $\mathscr{P}(U)$.
Indeed, by \cite[Proposition~8.2.1]{Niebook}, we have
\[
\mbox{int}(\mathscr{P}_d(U))\,\subseteq \,\bigcup_{k=1}^{\infty} \qmod{g}_{2k}\cap \re[\xi]_d
\,\subseteq\, \mathscr{P}_d(U).
\]
The $k$th order SOS approximation of \reff{primalproblem} is
\begin{eqnarray} \label{sosrelax}
	\left\{\begin{array}{rl}  
		f_{k}^{sos} := \min\limits_{x\in \re^n} & c^{T}x  \\
		\st & (Ax+b)^{T}[\xi]_{d}  \in \qmod{g}_{2k}\cap \re[\xi]_d, \\
		& x\in X.
	\end{array}\right.
\end{eqnarray}
Recall the dual cone of $\qmod{g}_{2k}$ is the tms cone $\mathscr{S}[g]_{2k}$
as in \reff{eq:S(g)_2k}. Let $X^*$ denote the dual cone of $X$. 
We can get the dual problem of \reff{sosrelax}:
\begin{eqnarray}\label{momrealx}
	\left\{ \begin{array}{rl}
		f_{k}^{mom}:=\max\limits_{(y,z)} &   -b^Ty  \\
		\st &c-A^Ty\in X^{*},  \\
		& y=z{\mid}_{d} := (z_{\alpha})_{\alpha\in\mathbb{N}^{r}_{d}},\\
		& z \in \mathscr{S}[g]_{2k}. \\
	\end{array}\right.
\end{eqnarray}
Since $U$ is compact, the $\mathscr{R}_d(U)$ is closed. Then $\qmod{g}_{2k}\subseteq \mathscr{P}(U)$ implies 
\begin{equation}\label{eq:Rd}
	\mathscr{R}_d(U) \,:=\quad (\mathscr{P}_d(U))^* \,\subseteq\, (\qmod{g}_{2k}\cap \re[\xi]_d)^*.
\end{equation}
So we say \reff{momrealx} is the $k$th order relaxation of the dual problem of \reff{primalproblem}. 
Under the strong duality, the \reff{primalproblem} can be solved by  \reff{sosrelax} 
if the relaxation \reff{momrealx} is tight.
Assume $X, X^*$ are convex with semidefinite representations.
We summarize Algorithm~\ref{sdprelaxalgorithm} for solving \reff{primalproblem}.
\begin{algorithm}\label{sdprelaxalgorithm}
	For the robust approximation \reff{primalproblem}, 
	let $k = k_0$. Then do the following:
	\begin{itemize} [leftmargin=5em]
		\item[\textbf{Step 1:}]
		Solve \reff{sosrelax}--\reff{momrealx} for optimizers $x^*$ and $(y^*,z^*)$ respectively. 
		
		\item[\textbf{Step 2:}]
		If \reff{sosrelax}--\reff{momrealx} have no duality gap and there exists $t\in[k_0,k]$ such that
		\begin{equation}\label{rankcondition}
			\mbox{rank~} M_{t}[z^{*}] = \mbox{rank~} M_{t - 1}[z^{*}],
		\end{equation}
		then stop and output the optimizer $x^*$ and the optimal value $f^* = c^Tx^*$.
		Otherwise, let $k = k + 1$ and go to Step 1.
	\end{itemize}
\end{algorithm}

When $X, X^*$ have semidefinite representations, the primal-dual pair \reff{sosrelax}--\reff{momrealx} are semidefinite programs.
They can be solved globally with efficient computational methods like interior point methods.
The rank condition in \reff{rankcondition} is called {\it flat truncation} \cite{NIE13}.
It is a sufficient condition for the tms $y^* = z^*\vert_d \in\mathscr{R}_d(U)$,
which implies the relaxation \reff{momrealx} is tight.
The result is proved in the following theorem.
For convenience, let $f^{sos}_{k}, f^{mom}_{k}$ denote the optimal values of \reff{sosrelax}--\reff{momrealx} 
respectively at the order $k$.
\begin{theorem}\label{thm:tightrel}
	Assume $x^{*}$ and $(y^{*},z^{*})$ are optimizers of \reff{sosrelax}-- \reff{momrealx} respectively at some order $k$. 
	If $y^{*} \in\mathscr{R}_{d}(U)$ and there is no duality gap between \reff{sosrelax}--\reff{momrealx}, 
	i.e., $f_{k}^{sos}=f_{k}^{mom}$, 
	then $x^{*}$  is also a minimizer for (\ref{primalproblem}).
\end{theorem}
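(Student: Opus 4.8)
The plan is to sandwich the true optimal value $f^{\min}$ of \reff{primalproblem} between $c^Tx^*$ from above and from below, and then use the no-duality-gap hypothesis to collapse the sandwich to an equality; feasibility of $x^*$ then upgrades this to optimality. For the upper bound, I would first observe that $x^*$ is feasible for \reff{primalproblem}. Indeed, $x^*$ solves \reff{sosrelax}, so $(Ax^*+b)^T[\xi]_d\in\qmod{g}_{2k}\cap\re[\xi]_d$ and $x^*\in X$; since $\qmod{g}\subseteq\mathscr{P}(U)$, truncation gives $\qmod{g}_{2k}\cap\re[\xi]_d\subseteq\mathscr{P}_d(U)$, so the robust membership constraint of \reff{primalproblem} holds at $x^*$. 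Consequently $f^{\min}\le c^Tx^*=f_k^{sos}$.

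The lower bound comes from a weak-duality computation using the dual point $(y^*,z^*)$. Take any $x$ feasible for \reff{primalproblem}, so $p_x:=(Ax+b)^T[\xi]_d\in\mathscr{P}_d(U)$. The coefficient vector of $p_x$ in the monomial basis $[\xi]_d$ is exactly $Ax+b$, hence the Riesz pairing evaluates as $\langle p_x,y^*\rangle=\mathscr{L}_{y^*}(p_x)=(Ax+b)^Ty^*$. Because $y^*\in\mathscr{R}_d(U)=(\mathscr{P}_d(U))^*$, this pairing is nonnegative, so $(Ax+b)^Ty^*\ge 0$, i.e. $(Ax)^Ty^*\ge -b^Ty^*$. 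On the other hand, the dual feasibility $c-A^Ty^*\in X^*$ together with $x\in X$ gives $(c-A^Ty^*)^Tx\ge 0$, that is $c^Tx\ge x^TA^Ty^*=(Ax)^Ty^*$. Chaining these two inequalities yields $c^Tx\ge -b^Ty^*$ for every feasible $x$, and taking the infimum gives $f^{\min}\ge -b^Ty^*=f_k^{mom}$.

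It remains to combine the bounds. Using $c^Tx^*=f_k^{sos}$, the no-duality-gap hypothesis $f_k^{sos}=f_k^{mom}$, and $f_k^{mom}=-b^Ty^*$, the two estimates read $c^Tx^*=f_k^{sos}=f_k^{mom}=-b^Ty^*\le f^{\min}\le c^Tx^*$, which forces equality throughout. Hence $f^{\min}=c^Tx^*$, and since $x^*$ is feasible for \reff{primalproblem}, it is a minimizer. I expect no deep obstacle here, since the entire argument is an assembly of facts already recorded in the excerpt; the only step demanding genuine care is the weak-duality computation, where I must correctly identify the coefficient vector of $(Ax+b)^T[\xi]_d$ with $Ax+b$, invoke the duality $\mathscr{R}_d(U)=(\mathscr{P}_d(U))^*$ to certify $(Ax+b)^Ty^*\ge 0$, and treat $X$ as a cone so that $c-A^Ty^*\in X^*$ pairs nonnegatively with $x\in X$. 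Crucially, the membership $y^*\in\mathscr{R}_d(U)$ (the conclusion of the flat-truncation condition \reff{rankcondition}) is taken as a hypothesis rather than proved here, which is what keeps the argument short.
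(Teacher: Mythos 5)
Your proof is correct and takes essentially the same route as the paper's: sandwich $f^{\min}$ between $f_k^{mom}=-b^Ty^*$ (using $y^*\in\mathscr{R}_d(U)$ together with dual feasibility $c-A^Ty^*\in X^*$) and $f_k^{sos}=c^Tx^*$ (using feasibility of $x^*$ for \reff{primalproblem}), then collapse the chain with the no-duality-gap hypothesis. The only difference is expository: the paper cites weak duality against the explicitly stated dual problem of \reff{primalproblem}, whereas you prove that same weak-duality inequality inline via the Riesz-pairing computation.
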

\begin{proof}
	Since $U$ in \reff{eq:setU2} is compact, $\mathscr{R}_d(U) = \mathscr{P}_d(U)^{*}$.
	The dual problem of \reff{primalproblem} is
	\begin{equation}\label{dualproblem}
		(D):\quad \left\{\begin{array}{cl}
			\max\limits_{y} & -b^Ty\\
			\st & c-A^Ty\in X^*,\\
			& y\in \mathscr{R}_d(U).
		\end{array}
		\right.
	\end{equation}
	Let $f^{\max}$ denote the optimal value of $(D)$.
	Since \reff{momrealx} is a relaxation of $(D)$, we have $f^{\max}\le f_k^{\max}$ for each $k$.
	If the optimizer $y^{*}\in\mathscr{R}_{d}(U)$ at some order $k$, then $y^{*}$ is also feasible for $(D)$, thus 
	\[ f_{k}^{mom} \,=\, -b^Ty^{*} \,\leq\, f^{\max}. \] 
	It implies that $f_k^{mom} = f^{\max}$ and that $y^*$ is the optimizer of $(D)$.
	Let $f^{\min}$ denote the optimal value of \reff{primalproblem}.
	By the weak duality between \reff{primalproblem} and $(D)$, we further have
	\[ f_{k}^{mom} \,=\, f^{\max} \,\leq\, f^{\min} \,\leq\,  f_{k}^{sos}. \]
	Suppose $f_{k}^{sos}=f_{k}^{mom}$. The above inequality implies
	\[
	-b^Ty^* \,=\, f^{\min} \,=\,  f_{k}^{sos} \,=\,  c^Tx^*.
	\]
	The $x^*$ is feasible for \reff{primalproblem}. 
	So it is also a minimizer of \reff{primalproblem}. 
\end{proof}
The above theorem guarantees Algorithm~\ref{sdprelaxalgorithm} returns the correct optimizer and optimal 
value of \reff{primalproblem} if it terminates in finite loops.
\begin{cor}
	If Algorithm~\ref{sdprelaxalgorithm} terminates in finite loops,
	then the output $x^*$ and $f^*$ are the correct optimizer and the optimal value of \reff{primalproblem}.
\end{cor}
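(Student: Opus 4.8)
The plan is to derive this corollary directly from Theorem~\ref{thm:tightrel}, since the two conditions in Step~2 of Algorithm~\ref{sdprelaxalgorithm} are precisely engineered to certify the two hypotheses of that theorem. Suppose the algorithm terminates at some order $k$. By the stopping rule, two things hold simultaneously at the returned optimizers $x^*$ and $(y^*,z^*)$: first, there is no duality gap between \reff{sosrelax} and \reff{momrealx}, i.e. $f_k^{sos}=f_k^{mom}$; second, the flat truncation condition \reff{rankcondition} holds for some $t\in[k_0,k]$, namely $\mbox{rank } M_t[z^*]=\mbox{rank } M_{t-1}[z^*]$.

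The key step is to convert this rank equality into the membership $y^*\in\mathscr{R}_d(U)$, which is the one genuine input that Theorem~\ref{thm:tightrel} does not supply by itself. First I would recall that, since $z^*\in\mathscr{S}[g]_{2k}$, both the moment matrix $M_k[z^*]$ and the localizing matrix $L_g^{(k)}[z^*]$ are positive semidefinite. The flat truncation condition at order $t$ then triggers the flat extension theorem as applied in \cite{NIE13}: the rank-preserving equality lets the degree-$2t$ truncation of $z^*$ extend to a full moment sequence whose moment matrix keeps the same rank, so this sequence admits a finitely atomic representing measure. Because $L_g^{(t)}[z^*]\succeq 0$, every atom of that measure lies in $U=\{\xi: g(\xi)\ge 0\}$, and restricting to degree $d$ yields $y^*=z^*|_d\in\mathscr{R}_d(U)$.

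With $y^*\in\mathscr{R}_d(U)$ and the no-duality-gap condition both in hand, Theorem~\ref{thm:tightrel} applies verbatim and gives that $x^*$ is a minimizer of \reff{primalproblem} with optimal value $f^{\min}=c^Tx^*=f^*$. This is exactly the output claimed by the algorithm, which finishes the proof. The only substantive point—and the step I expect to carry the weight—is the passage from flat truncation to the representing measure via \cite{NIE13}; once that membership is established, all remaining bookkeeping is an immediate invocation of Theorem~\ref{thm:tightrel}.
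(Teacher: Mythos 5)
Your proposal is correct and follows essentially the same route as the paper: termination gives the zero duality gap and the flat truncation condition, the latter (via \cite{NIE13}) yields a representing measure supported in $U$ for the truncation of $z^*$, hence $y^*=z^*|_d\in\mathscr{R}_d(U)$, and Theorem~\ref{thm:tightrel} then finishes the argument. Your unpacking of the flat-extension mechanism (positive semidefiniteness of $M_t[z^*]$ and $L_g^{(t)}[z^*]$ forcing a finitely atomic measure with atoms in $U$) is just a more detailed account of the step the paper delegates to \cite{NIE13}.
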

\begin{proof}
	Let $k$ be the order in the terminating loop. The termination criteria requires $f_k^{sos} = f_k^{mom}$ and 
	the rank condition \reff{rankcondition} holds. 
	By \cite{NIE13}, the \reff{rankcondition} implies that the truncated tms $z^*\vert_t$ admits a measure supported in $U$.
	Since $t\ge k_0$ and $y^* = z^*\vert_{k_0}$, we must have $y^*\in\mathscr{R}_d(U)$.
	Then the conclusion is implied by Theorem~\ref{thm:tightrel}.
\end{proof}

\subsection{Convergent properties}\label{suc:converge} 
We study convergent properties of Algorithm~\ref{sdprelaxalgorithm}.
Major results from \cite{NIE14, NIE15} are used in our discussions,
which include insightful analysis of linear optimization with moment and nonnegative polynomial cones.

We first discuss the asymptotic convergence of Algorithm~\ref{sdprelaxalgorithm}. 
Recall that $h(x,\xi) = (Ax+b)^T[\xi]_d$.
For convenience, we denote the coefficient vector
\begin{equation}\label{eq:hx} 
	h(x)\,\coloneqq\, Ax+b.
\end{equation}
The optimization problem \reff{primalproblem} is said to be strictly feasible if there exists 
$\hat{x}\in\mbox{int}(X)$ such that $h(\hat{x}, \xi) >0$ for all $\xi\in U$.
At the relaxation order $k$, let $(y^{(k)}, z^{(k)})$ denote the optimizer of \reff{momrealx} and
let $f_k^{sos}, f_k^{mom}$ denote the optimal values of \reff{sosrelax}--\reff{momrealx} respectively.
\begin{theorem}\label{asmptotic convergence}
	Assume $X$ is a convex set with a semidefinite representation.
	Suppose \reff{primalproblem} is strictly feasible and its dual problem is feasible. 
	Then for all $k$ sufficiently large, $(y^{(k)},z^{(k)})$ exists and that 
	\[
	f^{sos}_{k} \,=\, f^{mom}_k \,\to\,  f^{\min}\quad \mbox{as}\quad k\to \infty.
	\]
\end{theorem}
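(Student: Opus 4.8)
The plan is to combine a monotonicity argument for the relaxation values, conic strong duality driven by strict feasibility, and a convexity-plus-Positivstellensatz argument for the matching upper bound. First I would record the ordering of all four optimal values. Since $\qmod{g}_{2k}\cap\re[\xi]_d\subseteq\qmod{g}_{2k+2}\cap\re[\xi]_d\subseteq\mathscr{P}_d(U)$, the feasible set of \reff{sosrelax} is nested increasing in $k$ and contained in that of \reff{primalproblem}; hence $\{f_k^{sos}\}$ is nonincreasing and bounded below by $f^{\min}$, so it converges to some limit $L\ge f^{\min}$. Dualizing the inclusion $\qmod{g}_{2k}\cap\re[\xi]_d\subseteq\mathscr{P}_d(U)$ gives $\mathscr{R}_d(U)\subseteq(\qmod{g}_{2k}\cap\re[\xi]_d)^*$, so \reff{momrealx} relaxes $(D)$ and $f^{\max}\le f_k^{mom}$, while weak duality for the semidefinite pair gives $f_k^{mom}\le f_k^{sos}$. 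The assumed feasibility of $(D)$ forces $f^{\max}>-\infty$, and via the weak duality bound $f^{\max}\le f^{\min}$ the primal value $f^{\min}$ is finite; it also guarantees \reff{momrealx} is feasible at every order.

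Next I would use strict feasibility to obtain zero duality gap and attainment of the moment optimizer for all large $k$. Let $\hat x\in\mbox{int}(X)$ satisfy $h(\hat x,\xi)>0$ on the compact set $U$. Since $\qmod{g}$ is archimedean, Putinar's Positivstellensatz places $h(\hat x,\xi)$ in the interior of $\qmod{g}_{2k}\cap\re[\xi]_d$ for all sufficiently large $k$; together with $\hat x\in\mbox{int}(X)$ this furnishes a Slater point for \reff{sosrelax}. As the primal \reff{sosrelax} is then strictly feasible with finite value $f_k^{sos}\ge f^{\min}$, the conic strong duality theorem (following the framework of \cite{NIE14,NIE15}) yields $f_k^{sos}=f_k^{mom}$ and attainment of the maximum in \reff{momrealx}, i.e. $(y^{(k)},z^{(k)})$ exists.

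Finally I would show $L=f^{\min}$ by a convexity argument. Given $\delta>0$, choose $x'$ feasible for \reff{primalproblem} with $c^Tx'\le f^{\min}+\delta/2$, and set $x_\lambda=(1-\lambda)x'+\lambda\hat x$ for small $\lambda>0$. Convexity of $X$ gives $x_\lambda\in X$, and affineness of $h$ in $x$ together with $h(x',\xi)\ge 0$ on $U$ gives $h(x_\lambda,\xi)=(1-\lambda)h(x',\xi)+\lambda h(\hat x,\xi)\ge\lambda h(\hat x,\xi)>0$ on $U$. By Putinar's Positivstellensatz $h(x_\lambda,\xi)\in\qmod{g}_{2k(\lambda)}$ for some finite order $k(\lambda)$, so $x_\lambda$ is feasible for \reff{sosrelax} at that order and $f_{k(\lambda)}^{sos}\le c^Tx_\lambda\le f^{\min}+\delta$ once $\lambda$ is small. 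Monotonicity then forces $L\le f^{\min}+\delta$ for every $\delta>0$, whence $L=f^{\min}$; combined with the no-gap identity this gives $f_k^{sos}=f_k^{mom}\to f^{\min}$.

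The main obstacle I anticipate is the interior-point claim underpinning strong duality: one must verify that a polynomial strictly positive on $U$ genuinely lies in the relative interior of the truncated cone $\qmod{g}_{2k}\cap\re[\xi]_d$, so that Slater's condition holds in the correct ambient space and the conic duality theorem delivers \emph{both} zero gap and attainment of the dual optimum rather than merely equal values. The closedness of $\mathscr{R}_d(U)$ (valid since $U$ is compact) and the archimedean property are exactly what make this step go through, and one must confirm the primal value is finite before invoking the strong duality theorem in the direction ``strict primal feasibility $\Rightarrow$ dual attainment.''
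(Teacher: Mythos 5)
Your overall architecture is the same as the paper's: (i) use the strictly feasible point $\hat{x}$ together with archimedeanness to get a Slater-type condition for \reff{sosrelax} at all large orders, hence zero duality gap and attainment in \reff{momrealx} (whose feasibility comes from feasibility of the dual of \reff{primalproblem}, as you note); (ii) drive the value down to $f^{\min}$ by taking convex combinations $(1-\lambda)x' + \lambda\hat{x}$ of a near-optimal feasible point with $\hat{x}$, observing such points are strictly feasible and therefore feasible for \reff{sosrelax} at some finite order, and finishing with monotonicity of $f^{sos}_k$. Part (ii) is correct and matches the paper essentially line by line (the paper cites \cite[Proposition~1.3.1]{DBE09} for strict feasibility of the combination and argues exactly as you do).

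The soft spot is step (i), and you half-recognize this yourself. You assert that ``Putinar's Positivstellensatz places $h(\hat{x},\xi)$ in the interior of $\qmod{g}_{2k}\cap\re[\xi]_d$ for all sufficiently large $k$.'' Putinar's theorem gives only membership: each polynomial strictly positive on $U$ lies in $\qmod{g}_{2k}$ for some $k$ depending on that polynomial. It does not by itself produce a single order $k$ whose truncated cone contains an entire neighborhood of $h(\hat{x},\cdot)$; a priori the Putinar order could be unbounded as one ranges over the polynomials in any neighborhood, and that uniformity is exactly what ``interior'' requires. The missing ingredient is a quantitative version of Putinar: the uniform degree bound of Nie--Schweighofer \cite[Theorem~6]{NIE07}, which depends only on a positivity lower bound $\epsilon_0$, a bound on the coefficients, and $g$. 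This is precisely how the paper argues: compactness of $U$ yields $\delta>0$ and $\epsilon_0>0$ with $B(\hat{x},\delta)\subseteq X$ and $h(x,\xi)>\epsilon_0$ on $U$ for every $x\in B(\hat{x},\delta)$, and then \cite[Theorem~6]{NIE07} gives one order $N_0$ with $h(x,\cdot)\in\qmod{g}_{2k}$ for all $x\in B(\hat{x},\delta)$ and all $k\ge N_0$, so $\hat{x}$ is an interior point of the feasible set of \reff{sosrelax} — the Slater condition in the decision variable. Your closing diagnosis, that closedness of $\mathscr{R}_d(U)$ and the archimedean property ``are exactly what make this step go through,'' points at the wrong tools: closedness of $\mathscr{R}_d(U)$ plays no role here (it is used in identifying the dual cone in Theorem~\ref{thm:tightrel}), and archimedeanness alone yields only pointwise Putinar membership. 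Replace that sentence with an appeal to the uniform degree bound over the ball $B(\hat{x},\delta)$ (equivalently, over a compact family of polynomials of bounded degree and coefficients, uniformly positive on $U$), and your proof closes and coincides with the paper's.
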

\begin{proof}
	Suppose $\hat{x}$ is a strictly feasible point of \reff{primalproblem}.
	Then there exists $\epsilon_0>0$ such that $h(\hat{x},\xi)>\epsilon_0$ on $U$.
	Since $U$ is compact, there exists $\delta>0$ sufficiently small such that 
	$B(\hat{x}, \delta)\subseteq X$ and
	\[
	\forall x\in B(\hat{x},\delta):\quad
	\mbox{$h(x,\xi) > \epsilon_0$ on $U$ as a polynomial in $\xi$.}
	\]
	Since $\qmod{g}$ is archimedean, by \cite[Theorem~6]{NIE07}, 
	there exists $N_0 > 0$ such that $h(x,\xi)\in \qmod{g}_{2k}$ 
	for every $x\in B(\hat{x},\delta)$ and for all $k\ge N_0$.
	It implies that $\hat{x}$ is also a strictly feasible point of \reff{sosrelax} for all $k \geq N_{0}$. 
	Given the dual problem of \reff{primalproblem} is feasible, its relaxation \reff{momrealx} is also feasible.
	Hence for all $k$ big enough, there is a strong duality between \reff{sosrelax}--\reff{momrealx} and
	\reff{momrealx} is solvable with an optimizer $(y^{(k)}, z^{(k)})$.
	
	For an arbitrarily given $\epsilon_1\in(0,1)$, there exists a feasible point $\bar{x}$ of \reff{primalproblem}
	such that $f^{\min} \leq c^T\bar{x} < f^{\min} + \epsilon_1$.
	Denote 
	\[
	\tilde{x} \,:=\, (1-\epsilon_1)\bar{x}+ \epsilon_1 \hat{x}.
	\]   
	It is strictly feasible for \reff{primalproblem} (see as in \cite[Proposition~1.3.1]{DBE09}). 
	By previous arguments, we also have $\tilde{x}$ being feasible for \reff{sosrelax} 
	when $k$ is large enough. In this case,
	\[\begin{array}{c}
		f_k^{sos} \,\le\, c^{T}\tilde{x}\, = \, (1-\epsilon_1)c^T\bar{x}+\epsilon_1c^T\hat{x}\\ [2pt]
		\qquad \, <\, (1-\epsilon_1)(f^{\min}+\epsilon_1) + \epsilon_1 c^{T}\hat{x}.
	\end{array}\]
	Note $f^{\min} \leq f^{sos}_{k+1}\le f_k^{sos}$ for all $k$. 
	Since $\epsilon_1$ can be arbitrarily small, we can conclude that $f_k^{sos}\to f^{\min}$ as $k\to \infty$.
\end{proof}
The above conclusion still holds if $X$ is a convex set determined by SOS-concave polynomials. 
We refer to Section~\ref{sc:sosconvexpolynomial} for more details.
In addition, if we make some further assumptions on the optimizer(s) of \reff{primalproblem}, the finite convergence of Algorithm~\ref{sdprelaxalgorithm}
can be reached.

\begin{theorem}\label{finite convergence}
	Under all the assumptions of Theorem~\ref{asmptotic convergence},
	suppose \reff{primalproblem} is solvable with an optimizer $x^{*}$ such that $h(x^{*},\cdot)\in\qmod{g}_{2k}$
	for all $k>k_1$, and
	\begin{equation}\label{eq:assumptionproblem}
		\left\{\begin{array}{cl}
			\min\limits_{\xi\in\re^{r}} & h(x^*,\xi) = (Ax^*+b)^T[\xi]_d\\
			\st & g(\xi) \geq 0
		\end{array}\right.
	\end{equation}
	has only finitely many critical points $\zeta$ satisfying $h(x^*,\zeta)=0$.
	Then Algorithm~\ref{sdprelaxalgorithm} will terminate in finite loops.
\end{theorem}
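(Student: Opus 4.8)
The plan is to combine the asymptotic convergence from Theorem~\ref{asmptotic convergence} with the finite-atom structure forced by the two hypotheses, and then invoke the flat truncation theory of \cite{NIE13} together with the finite-convergence analysis of \cite{NIE14, NIE15}. First I would dispose of the duality-gap half of the termination criterion. By Theorem~\ref{asmptotic convergence} there is an order $N_0$ beyond which \reff{sosrelax}--\reff{momrealx} have no duality gap and \reff{momrealx} is solvable. The extra hypothesis $h(x^*,\cdot)\in\qmod{g}_{2k}$ for all $k>k_1$ shows that $x^*$ is feasible for \reff{sosrelax} once $k>k_1$, so $f_k^{sos}\le c^Tx^* = f^{\min}$; combined with weak duality this gives $f_k^{sos}=f_k^{mom}=f^{\min}$ for every $k>\max\{N_0,k_1\}$. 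Hence the no-duality-gap condition of Algorithm~\ref{sdprelaxalgorithm} holds at all large orders, and the whole question reduces to verifying the rank condition \reff{rankcondition} at some finite order.

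Next I would identify the dual optimal measure and show that it is finitely atomic. Since $x^*$ is feasible for \reff{primalproblem}, the polynomial $h(x^*,\xi)$ is nonnegative on $U$, so the minimum in \reff{eq:assumptionproblem} is at least $0$; the optimality of $x^*$ makes the robust constraint active, forcing this minimum to equal $0$ and to be attained exactly at the critical points $\zeta$ with $h(x^*,\zeta)=0$. These $\zeta$ are precisely the global minimizers of \reff{eq:assumptionproblem}, and by hypothesis there are only finitely many of them, say $\zeta_1,\dots,\zeta_N$. By the complementarity between the primal optimizer $x^*$ and a dual optimizer of the problem $(D)$ in \reff{dualproblem}, any representing measure of the dual optimum must be supported on the active set $\{\xi\in U: h(x^*,\xi)=0\}=\{\zeta_1,\dots,\zeta_N\}$, hence is a finitely atomic measure $\nu=\sum_{i=1}^N\lambda_i\delta_{\zeta_i}$.

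I would then turn this finite-atom structure into finite convergence of the moment side. The moment matrices of $\nu$ have rank exactly $N$ from some finite order on, so a flat extension exists at finite order. Following \cite{NIE14, NIE15}, the finiteness of the minimizers of \reff{eq:assumptionproblem}, together with the Archimedean property of $\qmod{g}$, guarantees that the moment hierarchy underlying \reff{momrealx} converges in finitely many steps; by the flat truncation theorem of \cite{NIE13}, the moment optimizer $z^*$ then satisfies $\rank M_t[z^*]=\rank M_{t-1}[z^*]$ for some $t\in[k_0,k]$ once $k$ is large enough. At that order both termination criteria of Algorithm~\ref{sdprelaxalgorithm} are met, so the algorithm stops in finitely many loops.

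The hard part will be rigorously transferring the finite-atom structure of the dual optimal measure to the moment optimizer sequence $z^{(k)}$ produced by \reff{momrealx}: asymptotic convergence by itself does not force flat truncation. One must use the optimality-condition machinery of \cite{NIE14} to show that $z^{(k)}$ localizes on the finite set $\{\zeta_1,\dots,\zeta_N\}$ and that its moment matrices attain a flat extension at a finite relaxation order rather than only in the limit. The Archimedean property of $\qmod{g}$ is precisely what makes this localization possible.
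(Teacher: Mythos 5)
Your first paragraph is sound and matches what the argument needs: feasibility of $x^*$ in \reff{sosrelax} for $k>k_1$, combined with Theorem~\ref{asmptotic convergence} and weak duality, settles the no-duality-gap half of the termination test, so everything reduces to the rank condition \reff{rankcondition}. The genuine gap is in the step you yourself flag as ``the hard part'' and never close. Knowing that the exact dual $(D)$ in \reff{dualproblem} has a finitely atomic optimal measure $\nu$ supported on $\{\zeta_1,\dots,\zeta_N\}$ says nothing, by itself, about the tms $z^{(k)}$ that the solver actually returns: $z^{(k)}$ is an optimizer of the \emph{relaxation} \reff{momrealx}, need not admit any representing measure, and need not be close to the moments of $\nu$ at any finite order. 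Moreover, the flat truncation theorem of \cite{NIE13} does not apply to \reff{momrealx} directly: it concerns the Lasserre hierarchy of a polynomial optimization problem, with the normalization $v_0=1$, which \reff{momrealx} does not carry. Saying that ``the optimality-condition machinery of \cite{NIE14} shows $z^{(k)}$ localizes on the finite set'' is a restatement of what must be proved, not a proof.

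The paper closes this gap with one concrete observation your proposal lacks: complementary slackness at the \emph{relaxed} level. Since $x^*$ is optimal for \reff{sosrelax} (using $h(x^*,\cdot)\in\qmod{g}_{2k}$) and $f_k^{sos}=f_k^{mom}$ for large $k$, one gets $(Ax^*+b)^Ty^{(k)}=0$ with $y^{(k)}=z^{(k)}\vert_d$. Hence, when $(z^{(k)})_0>0$, the normalized tms $z^{(k)}/(z^{(k)})_0$ is feasible for the Lasserre moment relaxation \reff{lasserre moment relaxation} of \reff{eq:assumptionproblem} and has objective value $0=\gamma_k^*$, so it is an \emph{optimizer} of \reff{lasserre moment relaxation}; only now do your two hypotheses (the $\qmod{g}_{2k}$ membership and the finitely many critical points with value zero) allow \cite[Theorem~2.2]{NIE13} to be invoked, yielding a flat truncation of this optimizer and hence of $z^{(k)}$ itself, since ranks of moment matrices are invariant under positive scaling. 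Your proposal also ignores the degenerate case $(z^{(k)})_0=0$, where the normalization is undefined; there one argues from $M_k[z^{(k)}]\succeq 0$ and \cite[Lemma~5.7]{MLA09} that $z^{(k)}\vert_{2k-2}=0$, which is trivially flat. Without this bridge from $z^{(k)}$ to \reff{lasserre moment relaxation}, and without the degenerate case, what you have is a correct plan whose decisive step is missing.
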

\begin{proof}
	Under given conditions, the optimization problem \reff{primalproblem} and its dual \reff{dualproblem}
	are both solvable with optimizers $x^*$ and $y^*$ respectively. The strong duality holds that
	\[\begin{aligned}
		0 &\,=\, -b^Ty^*-c^{T}x^{*}\\
		& \,=\, -(Ax^{*}+b)^Ty^{*} -(c-A^{T}y^{*})^{T}x^{*}.
	\end{aligned}\]
	Since $c-A^Ty^*\in X^*$, $h(x^*,\cdot) \in \mathscr{P}_d(U)$ and $y^*\in\mathscr{R}_d(U)$, 
	we must have
	\[
	(Ax^*+b)^Ty^* \,=\, 0,\quad (c-A^{T}y^{*})^{T}x^{*} \,=\, 0.
	\]
	It implies that \reff{eq:assumptionproblem} has the optimal value $0$, which is achieved at every point 
	in the support of the measure admitted by $y^*$. The \reff{eq:assumptionproblem} is a polynomial optimization 
	problem. Its $k$th order Moment-SOS relaxation pair is
	\begin{eqnarray}\label{lasserre sos relaxation}
		\gamma_{k} \,:=\, \max\,\gamma \quad \st\quad h(x^*,\xi) - \gamma \in \qmod{g}_{2k},	
	\end{eqnarray}
	\begin{eqnarray}\label{lasserre moment relaxation}
		\gamma_{k}^{*} \,:=\, \min\, (Ax^*+b)^Tv \quad
		\st \quad v_{0} = 1,~~v \in \mathscr{S}[g]_{2k},
	\end{eqnarray}
	where $\mathscr{S}[g]_{2k}$ is given in \reff{eq:S(g)_2k}.
	Suppose $h(x^*,\cdot)\in\qmod{g}_{2k}$ for all $k\ge k_1$ and there are only finitely many critical points $\zeta$ of \reff{eq:assumptionproblem} such that $h(x^*,\zeta)=0$. 
	For all $k$ that is large enough, the relaxation pair \reff{lasserre sos relaxation}--\reff{lasserre moment relaxation}
	is tight, i.e., $\gamma_k = \gamma_k^* = 0$,  
	and every minimizer of \reff{lasserre moment relaxation} has a flat truncation \cite[Theorem~2.2]{NIE13}.
	
	Let $k\ge k_1$ be sufficiently large. The \reff{momrealx} is solvable with the optimizer $(y^{(k)}, z^{(k)})$. 
	If $(z^{(k)})_{0} >0$, then $z^{(k)}/(z^{(k)}_0)$ is feasible for \reff{lasserre moment relaxation} and it
	satisfies
	\[
	(Ax^*+b)^T\big( z^{(k)}/z^{(k)}_0\big) \,=\,0.
	\]
	This is because  $x^{*}$  is also a minimizer of \reff{sosrelax} and there is no duality gap between 
	\reff{sosrelax}--\reff{momrealx}.
	The above implies $z^{(k)}/(z^{(k)})_0$ is an optimizer \reff{lasserre moment relaxation}, 
	thus it has a flat truncation and satisfies the rank condition \reff{rankcondition}.
	If $(z^{(k)})_{0}=0$, then $\hbox{vec}(1)^T M_{k}[z^{(k)}]\hbox{vec}(1) = 0$,
	where $\hbox{vec}(1)$ denotes the coefficient vector of the scalar polynomial that is constantly one.
	Since $M_{k}[z^{(k)}]\succeq 0$, we must have $M_{k}[z^{(k)}]\hbox{vec}(1) = 0$. 
	By \cite[Lemma~5.7]{MLA09}, $M_{k}[z^{(k)}]\hbox{vec}(\xi^{\eta}) = 0$ for all $\vert \eta \vert \leq k-1$.
	Then for every $\alpha=\beta+\eta$ with $\vert\beta\vert, \vert \eta\vert \leq k-1$,
	\[
	(z^{(k)})_{\alpha} \,=\, \hbox{vec}(\xi^{\beta})^TM_{k}[z^{(k)}]\hbox{vec}(\xi^{\eta}) \,=\, 0.
	\] 
	It implies $z^{(k)}{\mid}_{2k-2}$ is a zero vector, which is naturally flat. 
	Therefore, the rank condition \reff{rankcondition} holds for $k$ sufficiently large.   
\end{proof}

\section{CCO with SOS-Convexity}\label{sc:sosconvexpolynomial}
In this section, we study a more general kind of individual CCO with polynomial perturbation.
In \reff{eq:cco}, assume $f(x)$ is an SOS-convex polynomial and the constraining set
\[
X \,=\, \{ x\in\re^n: u_1(x)\ge 0,\ldots, u_{m_1}(x)\ge 0\},
\]
where each $u_i$ is an SOS-concave polynomial. Let $U$ be a selected uncertainty set as in \reff{eq:setU2}.
We repeat the robust approximation \reff{eq:robust_approximation} as
\begin{eqnarray}\label{sos-convexproblem}
	\left\{ \begin{array}{cl}
		\min\limits_{x\in\re^{n}}& f(x)  \\
		\st &  (Ax+b)^{T}[\xi]_{d} \in \mathscr{P}_d(U),  \\
		& u_1(x) \geq 0, \cdots, u_{m_1}(x)\geq 0.
	\end{array}\right.
\end{eqnarray}
When $f, u_i$ are nonlinear, 
Algorithm~\ref{sdprelaxalgorithm}  cannot be directly applied to solve
\reff{sos-convexproblem}. 
Interestingly, we can always relax \reff{sos-convexproblem} into a linear conic
optimization problem with nonnegative polynomial cones and semidefinite constraints. Let
\[
d_1 \,:=\, \Big\lceil \frac{1}{2}\max \big\{\hbox{deg}(f),\hbox{deg}(u_1),\cdots,\hbox{deg}(u_{m_1})
\big\} \Big\rceil.
\]
We can relax \reff{sos-convexproblem} into the following problem:
\begin{eqnarray}\label{sos-convexrealx}
	\left\{\begin{array}{cl}
		\min\limits_{(x,w)} & \langle f, w\rangle \\
		\st &  (Ax+b)^{T}[\xi]_{d}  \in \mathscr{P}_d(U),  \\
		& \langle u_{i},w\rangle \geq 0 , i=1,\cdots,m_1,\\
		& M_{d_1}[w] \succeq 0,\quad w_0 = 1,  \\
		&  x=\pi(w),\,\,x\in \mathbb{R}^{n},\,\,
		w\in \mathbb{R}^{\mathbb{N}^{n}_{2d_1}},
	\end{array}\right.
\end{eqnarray}
where $\pi:\mathbb{R}^{\mathbb{N}_{2d_1}^{n}}\to \mathbb{R}^{n}$ is the projection map defined by
\[
\pi(w) \,:=\, (w_{e_1}, \cdots, w_{e_n}), \quad w \in \mathbb{R}^{\mathbb{N}_{2d_1}^{n}}.
\]
The relaxation is tight under SOS-convex assumptions.
\begin{theorem}\label{sos-convexconvergence}
	Suppose $f, -u_1,\ldots, -u_{m_1}$ are all SOS-convex polynomials. 
	The optimization problems \reff{sos-convexproblem}--\reff{sos-convexrealx} are equivalent in the sense
	that they have the same optimal value and a feasible point $(x^{*},w^{*})$ is a minimizer of \reff{sos-convexrealx} 
	if and only if $x^* = \pi(w^*)$ is a minimizer of \reff{sos-convexproblem}.
\end{theorem}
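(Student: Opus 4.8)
The plan is to show the two problems have the same optimal value by establishing two opposite inequalities between the optimal values, and then to upgrade this to the stated minimizer correspondence. Throughout, write $\hat{x} := \pi(w)$ for the vector of first-order moments of a tms $w$. The single tool that does all the work is a Jensen-type inequality: if $p\in\re[x]$ is SOS-convex with $\deg(p)\le 2d_1$ and $w\in\re^{\mathbb{N}^n_{2d_1}}$ satisfies $M_{d_1}[w]\succeq 0$ and $w_0=1$, then $\langle p, w\rangle \ge p(\hat{x})$.

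First I would handle the easy direction, that \reff{sos-convexrealx} is a relaxation of \reff{sos-convexproblem}. Given any feasible point $x$ of \reff{sos-convexproblem}, set $w:=[x]_{2d_1}$, the tms of the Dirac measure at $x$, so that $w_\alpha = x^\alpha$. Then $w_0=1$, $M_{d_1}[w]=[x]_{d_1}[x]_{d_1}^T\succeq 0$, and $\pi(w)=x$; moreover $\langle u_i, w\rangle = u_i(x)\ge 0$ and $\langle f, w\rangle = f(x)$, while the robust membership constraint $(Ax+b)^T[\xi]_d\in\mathscr{P}_d(U)$ is literally unchanged since it already involves $x$. Hence $(x,w)$ is feasible for \reff{sos-convexrealx} with the same objective value, so the optimal value of \reff{sos-convexrealx} is at most that of \reff{sos-convexproblem}.

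For the reverse inequality I would invoke the Jensen lemma, which itself follows from the representation, valid for any SOS-convex $p$ and fixed point $\hat{x}$,
\[ p(x) - p(\hat{x}) - \nabla p(\hat{x})^T (x-\hat{x}) \in \Sigma[x]. \]
Applying the Riesz functional $\mathscr{L}_w$ and using $\mathscr{L}_w[1]=w_0=1$, $\mathscr{L}_w[x]=\pi(w)=\hat{x}$, together with the fact that $M_{d_1}[w]\succeq 0$ forces $\mathscr{L}_w[\sigma]\ge 0$ for every SOS $\sigma$ of degree at most $2d_1$, yields $\langle p, w\rangle - p(\hat{x})\ge 0$. I would record this as a lemma (it is the classical Jensen inequality for SOS-convex polynomials). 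Now let $(x^*,w^*)$ be feasible for \reff{sos-convexrealx} and put $\hat{x}=\pi(w^*)=x^*$. Applying the lemma to each $-u_i$ gives $\langle u_i, w^*\rangle \le u_i(\hat{x})$, so $u_i(\hat{x})\ge \langle u_i,w^*\rangle \ge 0$; together with the robust constraint this shows $\hat{x}$ is feasible for \reff{sos-convexproblem}. Applying the lemma to $f$ gives $\langle f, w^*\rangle \ge f(\hat{x})$. Taking $(x^*,w^*)$ a minimizer produces a feasible point of \reff{sos-convexproblem} with objective at most the optimal value of \reff{sos-convexrealx}, so the optimal value of \reff{sos-convexproblem} is at most that of \reff{sos-convexrealx}.

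Combining the two inequalities gives equality of the optimal values, say $v$. The minimizer correspondence then follows at once: if $(x^*,w^*)$ minimizes \reff{sos-convexrealx}, the previous paragraph shows $x^*=\pi(w^*)$ is feasible for \reff{sos-convexproblem} with $f(x^*)\le \langle f,w^*\rangle = v$, hence optimal; conversely, if $x^*$ minimizes \reff{sos-convexproblem}, the point $(x^*,[x^*]_{2d_1})$ is feasible for \reff{sos-convexrealx} with objective $f(x^*)=v$, hence optimal. I expect the main obstacle to be the Jensen lemma, specifically verifying that the SOS remainder from the linearization has degree at most $2d_1$ so that $M_{d_1}[w]\succeq 0$ genuinely certifies $\mathscr{L}_w[\sigma]\ge 0$. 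This is precisely where the choice of $d_1$ as half the largest degree among $f,u_1,\ldots,u_{m_1}$ is used, and where SOS-convexity (rather than mere convexity) is essential, since ordinary convexity gives no SOS certificate for the linearization remainder.
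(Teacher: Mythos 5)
Your proposal is correct and takes essentially the same route as the paper's proof: both establish the relaxation direction via the Dirac lifting $w=[x]_{2d_1}$ and the reverse direction by applying Jensen's inequality for SOS-convex polynomials to $f$ and each $-u_i$, yielding feasibility of $\pi(\hat w)$ and $f(\pi(\hat w))\le\langle f,\hat w\rangle$. The only difference is cosmetic: you sketch a proof of the Jensen lemma from the SOS certificate of the linearization remainder, whereas the paper simply cites Lasserre for it.
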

\begin{proof}
	Let $f_0, f_1$ denote the optimal values of \reff{sos-convexproblem}--\reff{sos-convexrealx} respectively.
	The $f_1\le f_0$ since \reff{sos-convexrealx} is a relaxation of \reff{sos-convexproblem}.
	Suppose $(\hat{x},\hat{w})$ is feasible for \reff{sos-convexrealx}.
	Since each $-u_i$ is SOS-convex, by Jensen's inequality \cite{LJB09}, we have
	\[
	-u_i(\hat{x}) \,=\, -u_i(\pi(\hat{w})) \,\le \, \langle -u_i, \hat{w}\rangle\,\le\, 0. 
	\]
	So $\hat{x} = \pi(\hat{w})$ must also be feasible for \reff{sos-convexproblem}.
	Also, since $f$ is SOS-convex, we have
	\[
	f_0 \,\le\, f(\hat{x}) \,=\, f(\pi(\hat{w})) \,\le\, \langle f,\hat{w}\rangle. 
	\]
	Then $f_0\le f_1$ since the above inequality holds for every feasible point of \reff{sos-convexrealx}. 
	So $f_0 = f_1$ and $x^* = \pi(w^*)$ is a minimizer of \reff{sos-convexproblem} if $(x^*,w^*)$ is an
	optimizer of \reff{sos-convexrealx}. Conversely, if $x^*$ is a minimizer of \reff{sos-convexproblem},
	then $w^* = [x^*]_{2d_1}$ is feasible for \reff{sos-convexrealx}, 
	and $(x^*,w^*)$ is an optimizer of \reff{sos-convexrealx}.
\end{proof}
As in Subsection \ref{A_Semidefinite_Relaxation_Algorithm}, 
we construct the $k$th order  SOS approximation of \reff{sos-convexrealx} with the truncated quadratic module of $g$  as follows
\begin{eqnarray}\label{Restricting_sos-convexrealx}
	\left\{\begin{array}{cl}
		\min\limits_{(x,w)}& \langle f,w\rangle \\
		\st &  (Ax+b)^{T}[\xi]_{d}  \in \qmod{g}_{2k},  \\
		& \langle u_{i},w\rangle \geq 0 ,\, i=1,\cdots,m_1,\\
		& M_{d_1}[w] \succeq 0,\quad w_{0}=1,   \\
		&  x=\pi(w),\quad x\in\mathbb{R}^{n},~~w\in \mathbb{R}^{\mathbb{N}^{n}_{2d_1}}.
	\end{array}\right.
\end{eqnarray}
Then we consider the dual problem of \reff{sos-convexrealx}.
Let $y\in\mathscr{R}_{d}(U)$, $\tau\in\mathbb{R}$,
$\lambda = (\lambda_1,\cdots,\lambda_{m_1})\in\re_+^{m_1}$ and $Q\succeq 0$.
The Lagrange function of (\ref{sos-convexrealx}) is 
\begin{align*}
	\mathcal{L}(w;y,\tau,\lambda,Q) \,=\, & 
	\langle f,w\rangle  - \langle A\pi(w)+b,y\rangle-\tau(w_{0}-1)\\
	&-\sum_{i=1}^{m_1}\lambda_i\langle u_i,w\rangle - \langle M_{d_1}[w],Q \rangle
\end{align*}
Denote $q(x) = f(x)-y^{T}Ax-\lambda^{T} u(x)-\tau$. One can simplify
\[
\mathcal{L}(w;y,\tau,\lambda,Q) \,=\, \langle q,w\rangle -
\langle M_{d_1}[w],Q\rangle+\tau-\langle  b,y\rangle.
\]
For $\mathcal{L}$ to be bounded from below for all tms $w$, we must have $\langle q,w\rangle =
\langle M_{d_1}[w],Q\rangle$. 
Note that $M_{d_1}[w]$ can be decomposed as
\[
M_{d_1}[w] \,=\, \sum\limits_{\alpha\in\mathbb{N}^{n}_{2d_1}}w_{\alpha}C_{\alpha},
\]
where each $C_{\alpha}$ is a symmetric matrix such that 
$[x]_{d_1}[x]_{d_1}^{T} := \sum_{\alpha\in\mathbb{N}^{n}_{2d_1}}x^{\alpha}C_{\alpha}$.
Then $\langle q,w\rangle = \langle M_{d_1}[w],Q\rangle$ can be reformulated as
\[
q(x) \,=\, \Big\langle \sum\limits_{\alpha\in\mathbb{N}^{n}_{2d_1}}x^{\alpha}C_{\alpha},Q\Big\rangle
\,=\, [x]_{d_1}^{T}Q[x]_{d_1},\quad \forall x\in \mathbb{R}^{n}.
\]
Let $u = (u_1,\ldots, u_{m_1})$.
Since $Q\succeq 0$, the above implies that $q(x)$ is an SOS polynomial in $x$, i.e.,
\[
q(x)=f(x)-y^{T}Ax-\lambda^{T} u(x)-\tau\in\Sigma[x]_{2d_{1}}.
\]  
Then the dual problem of \reff{sos-convexrealx} is
\begin{eqnarray} \label{sos-convexdualproblem}
	\left\{\begin{array}{cl}	
		\max\limits_{(\tau,\lambda, y)} & \tau - \langle b,y\rangle  \\ 
		\st &  f(x)-y^{T}Ax-\lambda^{T} u(x)-\tau\in\Sigma[x]_{2d_{1}}, \\
		& y \in \mathscr{R}_{d}(U),\,\, \lambda \in \mathbb{R}^{m_1}_{+},\,\, \tau \in \mathbb{R}.
	\end{array}\right.
\end{eqnarray}
The dual problem of \reff{Restricting_sos-convexrealx} is the $k$th order relaxation 
of the above problem, written as
\begin{eqnarray} \label{Relaxing_sos-convexdualproblem}
	\left\{\begin{array}{cl}	
		\max\limits_{(\tau,\lambda, y,z)} & \tau - \langle b,y\rangle  \\ 
		\st &  f(x)-y^{T}Ax-\lambda^{T} u(x)-\tau\in\Sigma[x]_{2d_{1}}, \\
		& y=z{\mid}_d,\,\,z \in \mathscr{S}[g]_{2k},\,\,\tau \in \mathbb{R},
		\,\, \lambda \in \mathbb{R}^{m_1}_{+}.
	\end{array}\right.
\end{eqnarray}
Since $\Sigma[x]_{2d_{1}}$ is semidefinite representable, 
\reff{Restricting_sos-convexrealx} and its dual \reff{Relaxing_sos-convexdualproblem} can be solved effectively 
by some semidefinite programming techniques. 
Then we summarize an algorithm for solving the robust approximation \reff{sos-convexproblem}
defined with SOS-convex polynomial.
\begin{algorithm}
	\label{Sdp_Relax_algorithm_for_SOS-convex_CCO}
	For the robust approximation \reff{primalproblem}, 
	let $k = k_0$. Then do the following:
	\begin{itemize} [leftmargin=5em]
		\item[\textbf{Step 1:}]
		Solve \reff{Restricting_sos-convexrealx} for a minimizer $(x^{*},w^{*})$ and 
		solve \reff{Relaxing_sos-convexdualproblem} for a maximmizer $(\tau^{*},\lambda^{*}, y^{*},z^{*})$.
		\item[\textbf{Step 2:}]
		If \reff{Restricting_sos-convexrealx} and  \reff{Relaxing_sos-convexdualproblem} have no duality gap
		and there exists an integer $t\in[k_0,k]$ such that $(\ref{rankcondition})$ holds, 
		then stop and output the optimizer $x^{*}=\pi(w^{*})$ and the optimal value $f(x^*)$. 
		Otherwise, let $k = k + 1$ and go to Step 2.
	\end{itemize}
\end{algorithm}

By Theorem~\ref{sos-convexconvergence}, the output optimizer $x^{*}$ in Step~2 
is also a minimizer of \reff{sos-convexproblem}. Since \reff{sos-convexproblem} is equivalent to
the linear conic optimization problem \reff{sos-convexrealx}, the convergent properties of Algorithm~
\ref{Sdp_Relax_algorithm_for_SOS-convex_CCO} can be proved with similar arguments as in
Subsection~\ref{suc:converge}.

\section{Size of uncertainty set}\label{sc:opti} 
In this section, we discuss how to choose the size of the uncertainty set $U$.
For convenience, denote the polynomial
\begin{equation}\label{Randomvecorofquantile}
	\Gamma(\xi) \,\coloneqq\, (\xi-\mu)^T\Lambda^{-1}(\xi-\mu). 
\end{equation}
Then we have $U = \{\xi\in\mathbb{R}^{r}\mid \Gamma(\xi) \leq \Gamma\}$ and 
\reff{eq:robust_approximation} can be expressed as 
\[
(\mbox{P}_{\Gamma}):\, \left\{\begin{array}{cl}
	\min\limits_{x\in X} & f(x)\\
	\st & h(x,\xi) \ge 0,\quad \mbox{if $\Gamma(\xi)\le \Gamma$}.
\end{array}\right.
\]
Clearly, $(\mbox{P}_{\Gamma})$ has a smaller feasible set with bigger $\Gamma>0$.
We want to find a small $\Gamma$ such that the optimizer of $(\mbox{P}_{\Gamma})$ is 
feasible for the original CCO. 
Assume the optimizer of \reff{eq:cco} is active at the chance constraint.
We introduce a heuristic method to compute the preferred set size.

Define the {\it $(1-\epsilon)$-quantile} of $\Gamma(\xi)$ as
\begin{equation}\label{eq:hatGamma}
	\hat{\Gamma}\,:=\,\inf\big\{ \Gamma: \mathbb{P}\{\xi: \Gamma(\xi)\le \hat{\Gamma}\} \,\ge\, 1-\epsilon\big\}. 
\end{equation}
When $\Gamma\ge \hat{\Gamma}$, every feasible point of $(\mbox{P}_{\Gamma})$ must also be feasible for \reff{eq:cco}.
So $\hat{\Gamma}$ gives an upper bound for the set size that we want.
However, it is very difficult to solve $\hat{\Gamma}$ analytically. 
In computations, it can be estimated using the quantile estimation method introduced by Hong et al in \cite{HON21}.
Suppose $\xi^{(1)},\cdots,\xi^{(N)}$ are independent and identically distributed (i.i.d.) samples of $\xi$. 
Up to proper reordering, we may have
\begin{equation}\label{orderstatistics}
	\Gamma(\xi^{(1)})\le \Gamma(\xi^{(2)})\le \cdots \le \Gamma(\xi^{(N)}).
\end{equation}
\begin{theorem}\cite[Lemma 3]{HON21}\label{quantileproof}
	Assume $\xi^{(1)},\cdots,\xi^{(N)}$ are i.i.d. samples of $\xi$ satisfying \reff{orderstatistics}.
	For given $\epsilon\in(0,1)$ and $\beta \in [0,1)$, the $\Gamma(\xi^{(L^*)})$ gives an upper bound on the 
	$(1-\epsilon)$-quantile of $\Gamma(\xi)$ with at least $(1-\beta)$-confidence level,
	where 
	\begin{equation}\label{statisticiupperbound}
		L^{*}:=\min\Big\{L\in\mathbb{N}: \sum_{i=0}^{L-1}\binom{N}{i}(1-\epsilon)^{i}\epsilon^{N-i}\geq 1-\beta,\, L\le N\Big\}.
	\end{equation}
	If \reff{statisticiupperbound} is unsolvable, then none of $\Gamma(\xi^{(i)})$ is a valid confidence upper bound. 
\end{theorem}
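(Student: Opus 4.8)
The plan is to recognize $\Gamma(\xi^{(1)})\le\cdots\le\Gamma(\xi^{(N)})$ as the order statistics of the i.i.d.\ scalar random variables $Y_j:=\Gamma(\xi^{(j)})$, each a copy of $Y:=\Gamma(\xi)$, and to convert the confidence claim into a binomial tail estimate. Writing $\hat\Gamma$ for the true $(1-\epsilon)$-quantile of $Y$ as in \reff{eq:hatGamma}, the assertion ``$\Gamma(\xi^{(L^*)})$ is an upper bound on $\hat\Gamma$ with confidence at least $1-\beta$'' is, by definition, the statement
\[
\mathbb{P}\big(\hat\Gamma \le \Gamma(\xi^{(L^*)})\big)\,\ge\, 1-\beta,
\]
where the probability is taken over the random draw of the $N$ samples. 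So the whole proof reduces to bounding this single probability from below.

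First I would carry out the order-statistic-to-binomial translation. Since $\Gamma(\xi^{(L^*)})$ is the $L^*$-th smallest of the $Y_j$, the failure event $\{\Gamma(\xi^{(L^*)})<\hat\Gamma\}$ holds if and only if at least $L^*$ of the samples fall strictly below $\hat\Gamma$. Letting $B:=\#\{j:Y_j<\hat\Gamma\}$ and $q:=\mathbb{P}(Y<\hat\Gamma)$, independence gives $B\sim\mathrm{Binomial}(N,q)$ and hence
\[
\mathbb{P}\big(\hat\Gamma\le\Gamma(\xi^{(L^*)})\big)
\,=\,\mathbb{P}(B\le L^*-1)
\,=\,\sum_{i=0}^{L^*-1}\binom{N}{i}q^{\,i}(1-q)^{N-i}.
\]
This is exactly the sum appearing in \reff{statisticiupperbound}, except with the true (possibly unknown) below-quantile mass $q$ in place of $1-\epsilon$.

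Next I would close the gap between $q$ and $1-\epsilon$. By the infimum definition of the $(1-\epsilon)$-quantile, every threshold $t<\hat\Gamma$ satisfies $\mathbb{P}(Y\le t)<1-\epsilon$, so taking the left limit $t\uparrow\hat\Gamma$ yields $q=\mathbb{P}(Y<\hat\Gamma)\le 1-\epsilon$. I then invoke monotonicity of the binomial lower tail: the map $q\mapsto\sum_{i=0}^{L^*-1}\binom{N}{i}q^{\,i}(1-q)^{N-i}=\mathbb{P}(\mathrm{Bin}(N,q)\le L^*-1)$ is non-increasing in $q$ (cleanest via a coupling in which $\mathrm{Bin}(N,q)$ is stochastically increasing in $q$). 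Combining $q\le 1-\epsilon$ with this monotonicity and with the defining inequality of $L^*$ in \reff{statisticiupperbound},
\[
\mathbb{P}\big(\hat\Gamma\le\Gamma(\xi^{(L^*)})\big)
\,\ge\,\sum_{i=0}^{L^*-1}\binom{N}{i}(1-\epsilon)^{i}\epsilon^{N-i}
\,\ge\, 1-\beta,
\]
which is the claim. The same computation explains the final sentence: if no $L\le N$ makes the sum reach $1-\beta$, then for every candidate index the confidence bound fails, so no order statistic is a valid upper bound.

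I expect the main obstacle to be the careful handling of the strict-versus-nonstrict inequality at the quantile together with the monotonicity step: the reduction works with $q=\mathbb{P}(Y<\hat\Gamma)$ rather than $\mathbb{P}(Y\le\hat\Gamma)$, and the bound $q\le 1-\epsilon$ must be argued from the infimum definition (this is where atoms of the distribution at $\hat\Gamma$ are absorbed). Once the worst-case value $q=1-\epsilon$ is justified via the monotone binomial tail, the rest is the bookkeeping already encoded in \reff{statisticiupperbound}.
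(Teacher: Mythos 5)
Your proposal is correct, but there is nothing in the paper to compare it against: the paper does not prove this statement at all. The theorem is imported verbatim as a citation of \cite[Lemma~3]{HON21}, and the theorem environment is immediately followed by the discussion of $p_{vio}$. What you have done is reconstruct, from scratch, the standard order-statistics argument that underlies the cited lemma, and your reconstruction is sound: identifying the failure event $\{\Gamma(\xi^{(L^*)})<\hat\Gamma\}$ with the event that at least $L^*$ of the i.i.d.\ scalars $Y_j=\Gamma(\xi^{(j)})$ fall strictly below $\hat\Gamma$, so that the coverage probability equals the binomial lower tail $\mathbb{P}\big(\mathrm{Bin}(N,q)\le L^*-1\big)$ with $q=\mathbb{P}(Y<\hat\Gamma)$; then absorbing possible atoms at the quantile by deriving $q\le 1-\epsilon$ from the infimum definition in \reff{eq:hatGamma}; and finally invoking stochastic monotonicity of the binomial in $q$ together with the defining inequality of $L^*$ in \reff{statisticiupperbound}. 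These are exactly the three ingredients needed, and the strict-versus-nonstrict care you flag is indeed where a naive version of the argument would break. (Incidentally, the paper's displayed definition \reff{eq:hatGamma} is circular as written --- $\hat\Gamma$ appears on both sides --- and you implicitly used the intended definition $\hat\Gamma=\inf\{t:\mathbb{P}(Y\le t)\ge 1-\epsilon\}$, which is the right reading.)

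One minor looseness: your justification of the last sentence of the theorem (``for every candidate index the confidence bound fails'') is not literally true for an arbitrary distribution. When $q<1-\epsilon$ (an atom at $\hat\Gamma$), the actual coverage $\mathbb{P}\big(\mathrm{Bin}(N,q)\le L-1\big)$ strictly exceeds the nominal sum in \reff{statisticiupperbound}, so some order statistic could still cover with probability at least $1-\beta$ for that particular distribution. The claim should be read in the distribution-free sense: for continuous distributions (e.g.\ $\Gamma(\xi)$ with Gaussian $\xi$, the case relevant to this paper) one has $q=1-\epsilon$ exactly, the binomial sum is then the exact coverage, and hence if it stays below $1-\beta$ for every $L\le N$, no order statistic carries a universal $(1-\beta)$ guarantee. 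This is a remark about interpretation, not a flaw in your main argument.
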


Suppose $x^{*}(\Gamma)$ is the optimizer of $(\mbox{P}_{\Gamma})$. 
The probability $\mathbb{P}\{\xi: h(x^{*}(\Gamma),\xi) <0\}$ can be estimated by sample averages; i.e.,
\begin{equation} \label{estimateprob}
	p_{vio}(x^*(\Gamma)) \,:=\, \frac{1}{\hat{N}}\sum\limits_{i=1}^{\hat{N}}\textbf{1}_{(-\infty,0)}
	\big(h(x^{*}(\Gamma),\zeta^{(i)})\big),
\end{equation}
where $\textbf{1}_{(-\infty,0)}(\cdot)$ denotes the characteristic function of $(-\infty,0)$
and $\zeta^{(1)},\ldots, \zeta^{(\hat{N})}$ are i.i.d. samples of $\xi$.

Assume $\hat{N}$ is big enough. Let $\rho\in(0,\epsilon)$ be a selected small tolerance. 
Since the optimizer of \reff{eq:cco} is active at the chance-constraint,
we want to determine a small $\Gamma$ such that $\vert p_{vio}(x^*(\Gamma))-\epsilon \vert\le \rho$.
If $p_{vio}(x^*(\Gamma))<\epsilon-\rho$,
then $x^*(\Gamma)$ is feasible to \reff{eq:cco} but it is expected to find a better
robust approximation with a smaller $\Gamma$.
If $p_{vio}(x^*(\Gamma))>\epsilon+\rho$, then $x^*(\Gamma)$ is not feasible for \reff{eq:cco} and
we need to select a larger $\Gamma$ such that the optimizer of \reff{eq:robust_approximation}
is feasible to \reff{eq:cco}.
We can repeat this process for several times until a proper $\Gamma$ is found such that
$p_{vio}(x^*(\Gamma))$ is equal or close enough to $\epsilon$.
This idea was used in Li and Floudas \cite{ZLI14, YYU17}.
We summarize it as a heuristic algorithm as follows.
\begin{algorithm}\label{iterativegorithm}
	For the CCO problem \reff{eq:cco}, 
	choose small scalars $\beta>0$, $\rho>0$, select sample sizes $N, \hat{N} (\hat{N}\gg N)$ 
	and let $\Gamma_{\mc{L}} = 0$ and $l=1$. 
	Then do the following
	\begin{itemize}[leftmargin=5em]
		\item[\textbf{Step 1:}] 
		Generate $N+\hat{N}$ i.i.d. samples of $\xi$ following the distribution $\mathbb{P}$.
		Select $N$ of them to be labeled with $\xi^{(i)}$ and ordered by \reff{orderstatistics}.
		Denote the rest samples by $\{\zeta^{(i)}\}_{i=1}^{\hat{N}}$.

		\item[\textbf{Step 2:}] 
		Determine $L^*$ in \reff{statisticiupperbound} with $\beta$.
		If \reff{statisticiupperbound}  is unsolvable, update $\beta\coloneqq 2\beta$.
		Set $\Gamma_1\coloneqq \Gamma(\xi^{(L^*)})$ and 
		$\Gamma_{\mc{U}} \coloneqq \Gamma(\xi^{(L^*)})$.
		Then construct the uncertainty set $U$ as in \reff{eq:setU2} with set size $\Gamma = \Gamma_{l}$.
		
		\item[\textbf{Step 3:}]
		Solve \reff{eq:robust_approximation} for a minimizer $x^{*}(\Gamma)$ and
		estimate the probability of constraint violation $p_{vio}(x^*(\Gamma))$ as in \reff{estimateprob}.
		If $\vert p_{vio}(x^*(\Gamma)) - \epsilon \vert \leq \rho$, 
		then stop and output the set size  $\Gamma$.
		Otherwise, go to the next step.
		
		\item[\textbf{Step~4:}]
		If $p_{vio}(x^*(\Gamma)) < \epsilon$, update $\Gamma_{\mc{U}} \coloneqq \Gamma$.
		Otherwise, update $\Gamma_{\mc{L}} \coloneqq\Gamma$.
		Let $l\coloneqq l+1$, update $\Gamma_l \coloneqq \frac{1}{2}(\Gamma_{\mc{L}}+\Gamma_{\mc{U}})$ 
		and go back to Step 3.
	\end{itemize}
\end{algorithm}

\section{Numerical experiments}\label{sc:numex}
In this section, we represent numerical experiments to show the efficiency of our method.
The computations are implemented in {\tt MATLAB} R2019a, running on a desktop computer with 4.0GB RAM and Intel(R) Core(TM) i3-4160 CPU using software {\tt Gloptipoly~3} \cite{HEN09}, 
{\tt Yalmip} \cite{JLO05}, and $\mathtt{SeDuMi}$ \cite{STU99}.
All associated samples are generated by {\tt MATLAB} commands following the specified distribution. For each example, we set the confidence level $\beta=0.05$, 
the sample sizes  $N=100$ and $\hat{N} = 10^{6}$,
and the tolerance parameter $\rho = 10^{-6}$, unless stated otherwise. 
We use $\Gamma^{*}$ to denote the terminated uncertainty set size in Algorithm~\ref{iterativegorithm}. 
The $f^{*}$ and $x^{*}$ are used to represent the optimal value and optimizer of 
\reff{eq:robust_approximation} with the set size $\Gamma^*$.
For neatness, we only display four decimal digits for numerical results.

\subsection{Efficincy of Algorithm~\ref{iterativegorithm}}
We give an example to show the efficiency of Algorithm~\ref{iterativegorithm}.
\begin{example}\label{example_of_alg3}
	{\rm
		Consider the individual CCO problem
		\[
		\left\{\begin{array}{cl}
			\min\limits_{x\in \re^3}&  f(x)=x_1 + x_2 + x_3   \\
			\st & \mathbb{P}\{\xi : h(x,\xi) \geq 0 \} \geq 1-\epsilon,  \\
			& x_1 - 2x_2 + 2x_3 \geq 2,
		\end{array}\right.
		\]
		where $\xi$ follows multivariate Gaussian distribution with  mean and covariance 
		\[
		\mu=\begin{bmatrix} 1\\1\\ 2\end{bmatrix},\quad
		\Lambda=\begin{bmatrix}2 &1 &0.5\\1&2&0.4\\0.5&0.4&3\end{bmatrix}.
		\]
		and the random constraining function
		\[
		\begin{array}{c}
			h(x,\xi) \,=\, (3x_1+2x_2+2x_3)\xi_1^4+(x_1+2x_2+2x_3-3)\xi_2^2\xi_3^2+(x_1- 2x_2)\xi_1^2\xi_2\\
			\qquad +(x_2+3x_3)\xi_2+(3x_2+x_3)\xi_3+(2x_1+4x_2+x_3).
		\end{array}
		\]
		We make numerical experiments for different $\epsilon, \beta$ and $N$.
		For each pair $(\beta, N)$, we made $100$ sampling instances.
		
		\noindent
		(i).
		First, we explore the performance of the initial set size $\Gamma(\xi^{(L^*)})$ 
		implied by Theorem~\ref{quantileproof}. For each instance, 
		we generate an uncertainty set with the initial size bound.
		The violation probability
		$p_{vio}(U):=\mathbb{P}\{\xi:\xi\not\in U\}$ can be evaluated analytically with Gaussian distributions.
		When $p_{vio}(U)>\epsilon$, we say the instance is {\it unsuitable}. 
		Denote $\#$ as the number of unstable instances and
		\[
		\overline{p_{vio}(U)}: \mbox{average of $p_{vio}(U)$},\quad
		p_{vio}^{\sigma}(U): \mbox{standard deviation of $p_{vio}(U)$}
		\] 
		among these $100$ instances. 
		We report the computational results in Table \ref{tab_set_size_0.05}.
		\begin{table}[ht]
				\caption{Numerical results for the probability violation of the initial size for Example \ref{example_of_alg3}}
				\label{tab_set_size_0.05}
				\centering
				\begin{tabular}{c|cccc|cccc}
					\toprule & 
					\multicolumn{4}{c|}{$\epsilon=0.05$} & \multicolumn{4}{c}{$\epsilon = 0.01$}\\ [2pt]
					\midrule
					$\beta$ & $N$ & $\overline{p_{vio}(U)}$ & $p_{vio}^{\sigma}(U)$ &$\#$ & $N$ 
					&$\overline{p_{vio}(U)}$& $p_{vio}^{\sigma}(U)$ & $\#$ \\ 
					\midrule
					\multirow{5}{*}{$0.01$}  & $90$   & $0.0103$ & $0.0106$ & $0$  \\ [2pt]
					& $500$  & $0.0286$ & $0.0075$ & $1$   & $459$  & $0.0023$   & $0.0024$ & $2$   \\[2pt]
					& $1000$ & $0.0348$ & $0.0050$ & $0$ & $1000$ & $0.0031$   & $0.0018$ & $0$ \\ [2pt]
					& $5000$ & $0.0429$ & $0.0030$ & $3$  & $5000$ & $0.0069$   & $0.0011$ & $0$     \\[2pt]
					& $10000$& $0.0451$ & $0.0022$ & $3$   & $10000$& $0.0080$   & $0.0009$ & $1$       \\
					\midrule
					\multirow{5}{*}{$0.05$}  & $59$   & $0.0166$ & $0.0182$ & $6$   & $299$  & $0.0032$ & $0.0030$ & $4$  \\[2pt]
					& $500$  & $0.0338$ & $0.0091$ & $5$   & $500$  & $0.0042$ & $0.0028$ & $4$    \\[2pt]
					& $1000$ & $0.0387$ & $0.0052$ & $3$  & $1000$ & $0.0053$ & $0.0024$ & $5$    \\[2pt]
					& $5000$ & $0.0451$ & $0.0032$ & $8$   & $5000$ & $0.0081$ & $0.0013$ & $8$   \\[2pt]
					& $10000$& $0.0464$ & $0.0019$ & $3$  & $10000$ & $0.0086$ & $0.0009$ & $8$    \\
				    \bottomrule
				\end{tabular}
			\end{table}
			It is easy to observe that for both risk levels, 
			$p_{vio}(U)$ goes approaching $\epsilon$ as $N$ increases, 
			while the number of unsuitable cases remains small.
			Indeed, even if the sample size $N$ is small, 
			the obtained uncertainty set is often suitable in our computational results.
			
			\noindent
			(ii).
			Then, we apply Algorithm~\ref{iterativegorithm} to determine a better set size than $\Gamma(\xi^{(L^*)})$ in (i).
			For each experiment, $p_{vio}(x^*(\Gamma))$ is estimated with $\hat{N}=10^{6}$ samples of $\xi$. 
			Denote the optimal value of \reff{eq:robust_approximation} by $f_{I}$ and $f_{T}$ respectively 
			for the initial set size and the terminating set size. We write
			\[
			\overline{f_I}: \mbox{average of $f_I$},\quad
			f_I^{\sigma}: \mbox{standard deviation of $f_I$}
			\]
			and $\overline{f_T},f_T^{\sigma}$ similarly.
			The computational results are reported in Table~\ref{tab_optimal_value_0.05}.
			\begin{table}[ht]
				\begin{center}
					\caption{Numerical results for Example $\ref{example_of_alg3}$}\label{tab_optimal_value_0.05} 
				\begin{tabular}{c|c|cccccc}
					\toprule
					$\epsilon$ & $\beta$ & $N$ & $\overline{f_I}$ & $f_I^{\sigma}$ & $\overline{f_T}$ & $f_T^{\sigma}$ \\
					\midrule
					\multirow{12}{*}{$0.05$} & \multirow{5}{*}{$0.01$}  & 90   & 2.5147 & 0.0639 & 1.2844 & $4.8869 \cdot 10^{-4}$  \\[2pt]
					&   & 500  & 2.4227 & 0.0211 & 1.2843 & $5.1423 \cdot 10^{-4}$   \\[2pt]
					&   & 1000 & 2.4055 & 0.0122 & 1.2845 & $4.7491 \cdot 10^{-4}$    \\[2pt]
					&   & 5000 & 2.3862 & 0.0066 & 1.2843 & $5.0483 \cdot 10^{-4}$     \\[2pt]
					&   & 10000& 2.3813 & 0.0048 & 1.2844 & $4.8448 \cdot 10^{-4}$      \\
					\cline{2-7}
					& \multirow{5}{*}{$0.01$}  & 59   & 2.4871 & 0.0765 & 1.2843 & $4.7734 \cdot 10^{-4}$   \\[2pt]
					&    & 500  & 2.4090 & 0.0224 & 1.2843 & $5.0555 \cdot 10^{-4}$    \\[2pt]
					&    & 1000 & 2.3961 & 0.0120 & 1.2843 & $5.8172 \cdot 10^{-4}$     \\[2pt]
					&    & 5000 & 2.3814 & 0.0069 & 1.2843 & $5.2490 \cdot 10^{-4}$      \\[2pt]
					&    & 10000& 2.3785 & 0.0041 & 1.2844 & $4.7893 \cdot 10^{-4}$       \\
					\midrule
					\multirow{10}{*}{$0.01$} & \multirow{5}{*}{$0.01$}  & 459  & 2.5918 & 0.0567 & 1.3458 & $6.2093 \cdot 10^{-5}$   \\ [2pt]
					&     & 1000 & 2.5593 & 0.0292 & 1.3458 & $5.0232 \cdot 10^{-5}$    \\ [2pt]
					&     & 5000 & 2.5123 & 0.0087 & 1.3458 & $4.5771 \cdot 10^{-5}$     \\ [2pt]
					&     & 10000& 2.5046 & 0.0063 & 1.3458 & $6.0832 \cdot 10^{-5}$      \\ [2pt]
					\cline{2-7}
					& \multirow{5}{*}{$0.05$}  & 299  & 2.5696  & 0.0485 & 1.3458 & $5.4640 \cdot 10^{-5}$     \\ [2pt]
					&      & 500  & 2.5472  & 0.0350 & 1.3458 & $5.3941 \cdot 10^{-5}$      \\[2pt]
					&      & 1000 & 2.5310  & 0.0240 & 1.3458 & $5.7009 \cdot 10^{-5}$       \\[2pt]
					&      & 5000 & 2.5037  & 0.0083 & 1.3458 & $4.6144 \cdot 10^{-5}$         \\ [2pt]            
					&      & 10000& 2.5004  & 0.0059 & 1.3458 & $5.4902 \cdot 10^{-5}$      \\
					\bottomrule
				\end{tabular}
			\end{center}
		\end{table}
		By applying Algorithm~\ref{iterativegorithm}, the approximation quality is improved 
		as the terminated optimal value average $\overline{f_T}$ is near half of the initial value
		$\overline{f_I}$. 
		On the other hand, the difference among $f_T^\sigma$ for different $(\beta,N)$ is tiny,
		which encourages us to use relatively smaller sample sizes to construct uncertainty sets.
	}
\end{example}

\subsection{Robust approximation of CCO}
We give some examples of individual CCO problems with linear objective functions.
For a given uncertainty set, we apply Algorithm~ \ref{sdprelaxalgorithm} 
to solve the robust approximation \reff{eq:robust_approximation}.
\begin{example}{\rm
		Consider the individual CCO problem
		\begin{eqnarray*}
			\left\{\begin{array}{cl}
				\min\limits_{x\in\mathbb{R}^{3}} & f(x)=2x_1+3x_2+x_3  \\
				\st & \mathbb{P}\{\xi: \, h(x,\xi)\geq 0\} \geq 0.75,\\
				& 4-x_1-x_2-x_3 \geq 0, \\
				& 2-x_1+2x_2-x_3 \geq 0,
			\end{array}
			\right.
		\end{eqnarray*}
		where the random constraining function
		\begin{eqnarray*}
			& &h(x,\xi)= (-3x_1+2x_2)\xi_1^4 + (x_1+3x_3+1)\xi_2^4 + (-3x_2+2x_3+3)\xi_1^2\xi_2 \\ 
			& & \qquad \qquad+(x_1+2x_3)\xi_2^2\xi_3+(2x_1+x_2-2x_3),
		\end{eqnarray*}
		and $\xi_1$, $\xi_2$ and $\xi_3$ are independent and identically distributed random variables with the uniform distribution on $[0,2]$. 
		The mean vector $\mu$ and covariance matrix $\Lambda$ of $\xi$ are 
		\[
		\mu \,=\, \mathbf{1}_{3\times 1},\quad \Lambda\,=\, \frac{1}{3}I_3,
		\]
		where $\mathbf{1}$ denotes the matrix of all ones and $I_n$ denotes the $n$-dimensional identity matrix.
		By Theorem~\ref{quantileproof}, we computed the initial set size and the probability violation
		\[
		\Gamma_1 \,=\, 4.4388,\quad
		p_{vio}(x^*(\Gamma_1)) \,=\, 0.0012.
		\]
		It took $24.1242$ seconds for Algorithm~\ref{iterativegorithm} to terminate after $21$ loops,
		where Algorithm \ref{sdprelaxalgorithm} stops at the initial loop in each inner iteration.
		In the terminating loop, we get 
		\[\begin{array}{c}
			\Gamma^* \,=\, 1.5387,\quad p_{vio}(x^*) \,=\, 0.2500,\\
			f^{*} \,= \, -1.6382,\quad x^* \,=\, (0.0531,\, -0.4513,\, -0.1781)^T.
		\end{array}\]
	}
\end{example}

\begin{example}{\rm
		Consider the individual CCO problem
		\begin{eqnarray*}
			\left\{\begin{array}{cl}
				\min\limits_{x\in\mathbb{R}^{4}}& f(x)=x_1+2x_2+3x_3+x_4      \\ 
				\st & \mathbb{P}\{\xi:\, h(x,\xi)  \geq 0\} \geq 0.80, \\
				& \begin{bmatrix} 8+3x_2-4x_4 & 5+x_3  & 2x_2-3x_4 \\
					5+x_3 & 10+2x_2 & -x_1-3x_2+3x_3 \\
					2x_2-3x_4 & -x_1-3x_2+3x_3 & 3+3x_1+8x_4    
				\end{bmatrix} \succeq 0,
			\end{array}\right.
		\end{eqnarray*}  
		where the constraining random function
		\begin{eqnarray*}
			& &h(x,\xi) =(x_1 + x_3+3x_4) \xi_1^5+(-x_2+3 x_3+2x_4)\xi_2^5+(x_1+3x_2+2x_4+2)  \\
			& & \qquad \qquad \xi_3^2\xi_4\xi_5^2+(2x_1+2x_2+x_4-5)\xi_3\xi_4\xi_5+(x_1+x_2+2x_3-x_4), 
		\end{eqnarray*}
		and each $\xi_{i}$ independently follows $t$-distribution with degrees of freedom $\bar{\nu}=3$. 
		The mean vector $\mu$ and covariance matrix $\Lambda$ of $\xi$ are ($\mathbf{0}$ is the zero matrix)
		\[
		\mu\,=\,  \mathbf{0}_{5\times 1},\quad \Lambda\,=\,3I_5.
		\]
		By Theorem~\ref{quantileproof}, we computed the initial set size and the probability violation
		\[
		\Gamma_1 \,=\, 9.0544,\quad
		p_{vio}(x^*(\Gamma_1)) \,=\, 0.0155.
		\]
		It took $40.8692$ seconds for Algorithm~\ref{iterativegorithm} to terminate after $23$ loops,
		where Algorithm \ref{sdprelaxalgorithm} stops at the initial loop in each inner iteration.
		In the terminating loop, we get 
		\[\begin{array}{c}
			\Gamma^* \,=\, 1.2693,\quad p_{vio}(x^*) \,=\, 0.2000,\\
			f^{*} \,= \, 0.7784,\quad x^* \,=\, (5.1776,\, -2.0061,\, 0.4772,\, -1.8185)^T.
		\end{array}\]
	}
\end{example}

\begin{example}{\rm
		Consider the individual CCO problem 
		\begin{eqnarray*}
			\left\{\begin{array}{cl}
				\min\limits_{x\in\mathbb{R}^{3}}& f(x) = -2x_1-3x_2+x_3       \\ 
				\st & \mathbb{P}\{\xi:~h(x,\xi)  \geq 0\} \geq 0.90, \\
				& 2-x_1+2x_2-x_3 \geq 0,  \\
				& 1-x_1^2-x_2^2-x_3^2 \geq 0,
			\end{array}
			\right.
		\end{eqnarray*}  
		where the random constraining function
		\begin{eqnarray*}
			& &h(x,\xi)=(3x_1-6x_2)\xi_1^4\xi_2^2+(x_1-x_3)\xi_1^3\xi_2^2+(x_1+3)\xi_1^2  \\
			& &\qquad \qquad +(x_3+2)\xi_2^2 +3x_2\xi_1-4x_3\xi_2,
		\end{eqnarray*}
		and $\xi_1$ and $\xi_2$ are two independent random variables that  follow the 
		exponential distribution with parameter $\bar{\lambda}=1$ and $\bar{\lambda}=2$, respectively.
		The mean vector $\mu$ and the covariance matrix $\Lambda$ of $\xi$ are
		\[ 
		\mu\,=\,\begin{bmatrix} 1\\2 \end{bmatrix},\quad 
		\Lambda \,=\, \begin{bmatrix} 1&0\\0&4\end{bmatrix}.
		\]
		By Theorem~\ref{quantileproof}, we computed the initial set size and the probability violation
		\[
		\Gamma_1 \,=\, 5.3688,\quad
		p_{vio}(x^*(\Gamma_1)) \,=\, 0.0000.
		\]
		It took $18.0012$ seconds for Algorithm~\ref{iterativegorithm} to terminate after $20$ loops,
		where Algorithm \ref{sdprelaxalgorithm} stops at the initial loop in each inner iteration.
		In the terminating loop, we get 
		\[\begin{array}{c}
			\Gamma^* \,=\, 1.2693,\quad  p_{vio}(x^*) \,=\, 0.1000,\\
			f^{*} \,= \, -3.5249,\quad x^* \,=\, (0.7656,\,0.5576,\,-0.3208)^T.
		\end{array}\]
	}
\end{example}

\begin{example}{\rm
		Consider the individual CCO problem  
		\begin{eqnarray*}
			\left\{\begin{array}{cl}
				\min\limits_{x\in\mathbb{R}^{2}} &f(x)= x_1+2x_2   \\
				\st & \mathbb{P} \{\xi : \, h(x,\xi) \geq 0\} \geq 0.95, \\
				& 3+2x_1-x_2 \geq 0,            \\
				& 1-x_1+x_2\geq 0,
			\end{array}\right.
		\end{eqnarray*}
		where the random constraining function
		\begin{eqnarray*}
			&  &h(x,\xi)=x_1\xi_1^4+3x_2\xi_2^4+2x_1\xi_1\xi_2+(3x_1-3x_2)\xi_2^2  \\
			& & \qquad\qquad+(x_2+3)\xi_1+(-x_1+x_2-2)\xi_2+(3x_1+4x_2),
		\end{eqnarray*}
		and the random vector $\xi$ follows  an empirical probability measure supported 
		by $1000$ samples.
		Those samples were randomly generated from the standard Gaussian distribution.
		The mean $\mu$ and  covariance matrix $\Lambda$ of $\xi$ are 
		\[
		\mu \,=\, \begin{bmatrix}0.0676\\0.0132
		\end{bmatrix},\quad \Lambda\, =\,\left[\begin{array}{rr}
			0.9887& -0.0057\\ -0.0057 & 0.9848
		\end{array}\right].
		\]
		By Theorem~\ref{quantileproof}, we computed the initial set size and the probability violation  with $\hat{N}=1000$ samples
		\[
		\Gamma_1 \,=\, 6.4948,\quad
		p_{vio}(x^*(\Gamma_1)) \,=\, 0.0000.
		\]
		It took $3.0562$ seconds for Algorithm~\ref{iterativegorithm} to terminate after $11$ loops,
		where Algorithm \ref{sdprelaxalgorithm} stops at the initial loop in each inner iteration.
		In the terminating loop, we get 
		\[\begin{array}{c}
			\Gamma^* \,=\, 0.7548,\quad  p_{vio}(x^*) \,=\, 0.0500,\\
			f^{*} \,= \, 1.0895,\quad x^* \,=\, (1.0298,\, 0.029)^T.
		\end{array}\]
		
		The $x^*$ is the optimizer of \reff{eq:robust_approximation} where the uncertainty set $U$ has the size $\Gamma^*$.
		Interestingly, $U$ with the size $\Gamma^*$ only contains $303$ samples of $\xi$; i.e.,
		\[ \mathbb{P}\{\xi:\xi\in U\} = 0.303\quad <\quad0.95=\mathbb{P}\{\xi: h(x^*,\xi)\ge 0\}. \]
		The probability difference implies that $\mathbb{P}\{\xi:\xi\in U\}\ge 1-\epsilon$ is sufficient but not necessary  
		for the chance constraint in \reff{eq:cco} to hold.
		In other words, there exists $U$ such that $\mathbb{P}\{\xi \, :\,  \xi \in U \} < 1 - \epsilon$ 
		but $\mathbb{P}\{\xi \, :\,  h(x^{*},\xi)\geq 0 \} \geq 1 - \epsilon$ for the optimizer $x^{*}$ of (1.3). 
	}
\end{example}

Then we give some examples of individual CCO problems defined with SOS-convex polynomials.
For a given uncertainty set, we apply Algorithm~\ref{Sdp_Relax_algorithm_for_SOS-convex_CCO}
to solve the robust approximation \reff{eq:robust_approximation}.
\begin{example}{\rm
		Consider the individual CCO problem
		\begin{eqnarray*}
			\left\{\begin{array}{cl}
				\min\limits _{x\in\mathbb{R}^{5}} & f(x)=4x_1^4+6x_2^2+x_3+3x_4+x_5 \\
				\st & \mathbb{P}\{\xi: \, h(x,\xi)\geq 0\}\geq 0.90,   \\
				& u_{1}(x)=8-x_1^2-x_2^2-x_3^2-x_4^2-x_5^2 \geq 0,\\
				& u_{2}(x)=10-3x_1^4-6x_2^2-2x_3^4+6x_4-3x_5\geq 0,
			\end{array}\right.       
		\end{eqnarray*}
		where the random constraining function
		\begin{eqnarray*}
			\begin{array}{ll}
				h(x,\xi)\,=\, &(3x_1+2x_2+2x_4) \xi_1^4 +(x_2-2x_4+2x_5)            
				\xi_2^2\xi_3^2 +(x_3-2x_4)\xi_1^2\xi_4+(3x_2\\
				&-x_3-3x_5)\xi_3+(2x_2-3x_5)\xi_4+(2x_1+4x_2+x_3-5x_4-10x_5),
			\end{array}
		\end{eqnarray*}
		and the random vector $\xi$ follows $t$-distribution with freedom degree, location and the scale matrix 
		$$\bar{\nu} \,=\, 4,\quad 
		\bar{\mu} \,=\, \begin{bmatrix}
			1\\1\\2\\3
		\end{bmatrix},\quad 
		\bar{\Lambda}\,=\, \begin{bmatrix}
			4&2&0&1\\2&3&0&1\\0&0&2&3\\1&1&3&6
		\end{bmatrix}.$$ 
		Then the mean and covariance of $\xi$ are 
		\[
		\mu \,=\, \bar{\mu},\quad \Lambda\,=\, \frac{\bar{\nu}}{\bar{\nu}-2}\bar{\Lambda}.
		\]
		It is easy to verify that $f$, $-u_1$ and $-u_2$ are all SOS-convex polynomials. 
		By Theorem~\ref{quantileproof}, we computed the initial set size with $N = 300$ samples and the probability violation
		\[
		\Gamma_1 \,=\, 48.6056,\quad
		p_{vio}(x^*(\Gamma_1)) \,=\, 0.0000.
		\]
		It took $21.2167$ seconds for Algorithm~\ref{iterativegorithm} to terminate after $21$ loops,
		where Algorithm~\ref{Sdp_Relax_algorithm_for_SOS-convex_CCO} stops at the initial loop in each inner iteration.
		In the terminating loop, we get 
		\[\begin{array}{c}
			\Gamma^* \,=\, 3.2416,\quad  p_{vio}(x^*) \,=\, 0.0100,\quad
			f^{*} \,= \, -3.7496,\\
			x^* \,=\, (0.5603,\,-0.0467,\,-1.3485,\,-0.7668,\,-0.5080)^T.
		\end{array}\]
	}
\end{example}

\begin{example}{\rm
		Consider the individual CCO problem  
		\begin{eqnarray*}
			\left\{\begin{array}{cl}
				\min\limits_{x\in\mathbb{R}^{4}}& f(x)=6x_1^4+x_3^2+3x_4^2+5x_2  \\ 
				\st & \mathbb{P}\{\xi:~h(x,\xi)  \geq 0\}\geq 0.85,  \\
				& u_1(x)=11-(x_1+x_2)^4-2x_3^4-(3x_3-x_4)^2+5x_2 \geq 0,      \\
				& u_2(x)=6 -(x_2-x_3)^2-3x_2x_3+4x_3+3x_4 \geq 0,
			\end{array}\right.
		\end{eqnarray*}  
		where the random constraining function
		\begin{eqnarray*}
			\begin{array}{ll}
				h(x,\xi) \,=\, &(8x_1+x_2+6x_3)\xi_1^2\xi_2^4+(x_3-2x_4)\xi_3^2\xi_4^2+(x_2+3x_4)\xi_2\xi_3^2\\
				&+(x_1-2x_2+x_3)\xi_1\xi_2+(2x_1+3x_3+1)\xi_3\xi_4+(8x_1-4x_2-2x_3),
			\end{array}
		\end{eqnarray*}
		and the random variables $\xi_i,i=1,2,3,4$ are independent to each other.
		Assume $\xi_1$ is a beta distribution with shape parameters $\bar{\alpha}=\bar{\beta}=2$, 
		$\xi_2$ is a gamma distribution with shape parameter $\bar{k}=2$ and scale parameter $\bar{\theta}=1$, 
		$\xi_3$ is a chi-squared distribution with degrees of freedom $3$, 
		and $\xi_4$ is a chi-squared distribution with degrees of freedom $4$. 
		Then $\xi$ has mean and covariance 
		$$\mu=\begin{bmatrix}
			\frac{1}{2}\\2\\3\\4
		\end{bmatrix}, ~\Lambda=\begin{bmatrix}
			\frac{1}{20} & 0 & 0  & 0\\0 & 2 & 0 & 0\\0 & 0 & 6 &0\\0&0&0&8
		\end{bmatrix}.$$
		It is easy to verify that $f$, $-u_1$, and $-u_2$ are all SOS-convex polynomials. 
		By Theorem~\ref{quantileproof}, we computed the initial set size with $N = 300$ samples and the probability violation
		\[
		\Gamma_1 \,=\, 8.1934,\quad
		p_{vio}(x^*(\Gamma_1))\,=\, 0.0000.
		\]
		It took $36.2564$ seconds for Algorithm~\ref{iterativegorithm} to terminate after $23$ loops,
		where Algorithm ~\ref{Sdp_Relax_algorithm_for_SOS-convex_CCO} stops at the initial loop in each inner iteration.
		In the terminating loop, we get 
		\[\begin{array}{c}
			\Gamma^* \,=\, 0.2918,\quad  p_{vio}(x^*) \,=\,0.1500,\\
			f^{*} \,= \, -8.2948,\quad
			x^* \,=\, (0.5030,\, -1.7362,\, 0.0185,\, -0.0240)^T.
		\end{array}\]
	}
\end{example}

Next, we give an example of individual CCO problems with an application background in portfolio selection problems.
\begin{example} \label{var_portfoilio_optimizatio}{\rm\textbf{(VaR Portfolio Optimization)} 
		An investor intends to invest in $4$ risky assets. 
		The goal is to get the minimal loss level such that the probability of a bigger loss 
		does not exceed a preferred risk level $\epsilon$.
		Value-at-Risk (VaR) portfolio optimization can be used to model this problem. 
		It has the form
		\begin{eqnarray*}
			\left\{\begin{array}{cl}
				\min\limits_{(x_0,x)} & x_0  \\
				\st & \mathbb{P}\Big\{\xi:~ x_0\geq -\sum\limits_{i=1}^4 x_ir_i(\xi)\Big\} \geq 1-\epsilon,\\
				& \sum\limits_{i=1}^4x_i = 1,\, x_0\in\re,\\
				& x = (x_1,x_2,x_3,x_4)\in\re^4_+,
			\end{array}\right.
		\end{eqnarray*}
		where $\epsilon$ is a preferred risk level, and each function $r_i(\xi)$ 
		stands for the return rate function of the $i$-th asset. Assume
		$\xi = (\xi_1,\xi_2,\xi_3)$ and
		\[\begin{array}{ll}
			r_1(\xi)\,=\, 0.5+\xi_1^2-\xi_2^2\xi_3^2+\xi_1^4,  
			&r_2(\xi)\,=\, -1+\xi_2^2+\xi_2^4-\xi_1^2\xi_3^2,  \\
			r_3(\xi)\, =\, 0.8+\xi_3^2-\xi_1\xi_2+\xi_3^4,
			&r_4(\xi)\,=\, 0.5+\xi_3-\xi_1\xi_2^2\xi_3+\xi_1^2\xi_3^2.
		\end{array}\]
		In this application, we consider $\xi_1,\xi_2$ and $\xi_3$ are independently distributed random variables.
		Suppose $\xi_1$ follows the beta distribution with shape parameters $\bar{\alpha}=\bar{\beta}=4$, 
		$\xi_2$ follows the log-normal distribution with parameters $\bar{\mu}=0, \bar{\sigma}=1$ 
		and $\xi_3$ follows the log-normal distribution with parameters $\bar{\mu}=-1,\bar{\sigma}=1$.  Then $\xi$ has the mean vector and covariance matrix
		\[ 
		\mu \,=\, \begin{bmatrix}1/2 \\ \sqrt{e}\\ 1/\sqrt{e}\end{bmatrix}, 
		\quad \Lambda \,=\, \begin{bmatrix} \frac{1}{36} & 0 & 0 \\
			0 & e^2-e & 0\\ 0 & 0 & 1-\frac{1}{e}
		\end{bmatrix}.
		\]
		For a given uncertainty set and for different risk levels, i.e., $\epsilon\,\in\,\{0.05,\, 0.20,\, 0.35\}$, 
		we apply Algorithm~\ref{sdprelaxalgorithm} to solve the robust approximation problem (\ref{eq:robust_approximation}).
		The computational results are reported in Table~\ref{porfolioapplication},
		where $\Gamma_1$ denotes the initial set size implied by Theorem~\ref{iterativegorithm},
		$f_I$ denotes the optimal value of \reff{eq:robust_approximation} with the initial set size.
		\begin{table}[ht] 
			\begin{center}
				\caption{Numerical performance for Example \ref{var_portfoilio_optimizatio} \label{porfolioapplication}}
				\begin{tabular}{cccc}	
					\toprule
					$\epsilon$   & $0.05$ & $0.20$ &  $0.35$ \\
					\midrule
					$\Gamma_1$ & $8.6725$ & $3.9047$ & $3.7130$ \\[3pt]
					$\Gamma^{*}$& $0.5703$ & $0.31374$ & $0.1191$ \\[3pt]
					$p_{vio}(x^*(\Gamma_1))$ & $3.0600\cdot 10^{-4}$ & $0.0011$ & $0.0031$ \\[3pt]
					$p_{vio}(x^*)$& $0.0500$ & $0.2000$ & $0.3500$ \\[3pt]
					$f_I$ & $-0.5340$ & $-0.5364$ & $-0.5365$\\[3pt]
					$f^{*}$& $-0.5598$ & $-0.6642$ & $-0.8127$ \\[3pt]
					$x^{*}$ 
					&  $\begin{bmatrix}0.3909\\ 0.0751\\0.3515\\0.1826\end{bmatrix}$ 
					&$\begin{bmatrix}0.1417\\ 0.0788\\5.3332\cdot 10^{-9}\\0.7795\end{bmatrix}$
					&$\begin{bmatrix}3.3121\cdot10^{-9}\\0.1523\\4.9504\cdot 10^{-9}\\ 0.8477\end{bmatrix}$\\[3pt]
					Loop & $18$ & $20$ & $21$ \\[3pt]
					Time(seconds) &$17.2824$ & $18.6632$  & $19.2371$ \\[3pt]
					\bottomrule
				\end{tabular}
		\end{center}
	\end{table}
}
\end{example}

\section{Conclusion and Discussions}\label{sc:conclusion}
This paper focuses on individual CCO problems of polynomials
with the chance constraint being affine in decision variables.
A robust approximation method is proposed to transform such  polynomially  perturbed CCO
problems into linear conic optimization with nonnegative polynomial cones.
When the objective function is linear and the constraining set $X$ has a semidefinite representation or the objective function is SOS-convex and defining functions for $X$ are all SOS-concave, semidefinite relaxation algorithms, i.e., Algorithm \ref{sdprelaxalgorithm} and Algorithm \ref{Sdp_Relax_algorithm_for_SOS-convex_CCO} are proposed to solve these robust approximations globally. 
Convergent analysis of these algorithms is carried out.
In addition, discussions are made to construct efficient uncertainty sets of 
the robust approximation. A heuristic method is introduced to find good set sizes.
Numerical examples, as well as an application for portfolio optimization, 
are given to show the efficiency of our approach.

There is promising future work for this paper.
For instance, can we extend this method for joint CCO problems with polynomial perturbation?
How do we solve more general polynomial individual CCO problems without convex assumptions?
How to analyze the gap between the robust approximation and the original problem?
These questions motivate our ongoing exploration.

\end{document}